\newtheorem{theorem}{Theorem}[section]
\newtheorem{lemma}[theorem]{Lemma}
\newtheorem{proposition}{Proposition}
\theoremstyle{definition}
\newtheorem{definition}[theorem]{Definition}
\newtheorem{remark}{Remark}
\newcommand{\tc }{\mathtt{c}}
\newcommand{\R}{\mathbb{R}}
\newcommand{\T}{\mathbb{T}}
\newcommand{\C}{\mathbb{C}}
\newcommand{\Z}{\mathbb{Z}}
\newcommand{\N}{\mathbb{N}}
\newcommand{\bral}{[\![}
\newcommand{\brar}{]\!]}
\newcommand{\cK}{\mathcal{K}}
\title[] %Use the shortened version of the full title
{Sobolev instability in the cubic NLS equation with convolution potentials on irrational tori}
\author[Filippo Giuliani]{Filippo Giuliani}
\thanks{The author has received funding from INdAM-GNAMPA, Project CUP E55F22 000270001. }
\begin{document}
	
	\maketitle
	
	\vspace{-1cm}
	
	% Enter the first author's name and address:
	%\centerline{\scshape Filippo Giuliani$^*$}
	\medskip
	{\footnotesize
		% please put the address of the first author
		 \centerline{Dipartimento di matematica, Politecnico di Milano, }
		   \centerline{Piazza Leonardo Da Vinci 32, 20133, Milano, Italy}
		    \centerline{Email address: filippo.giuliani@polimi.it}
	} % Do not forget to end the {\footnotesize by the sign }

	\begin{abstract}  
		In this paper we prove the existence of solutions to the cubic NLS equation with convolution potentials on two dimensional irrational tori undergoing an arbitrarily large growth of Sobolev norms as time evolves. Our results apply also to the case of square (and rational) tori.
		We weaken the regularity assumptions on the convolution potentials, required in a previous work by Guardia \cite{Guardia14} for the square case, to obtain the $H^s$-instability ($s>1$) of the elliptic equilibrium $u=0$.
		We also provide the existence of solutions $u(t)$ with arbitrarily small $L^2$ norm which achieve a prescribed growth, say $\|u(T)\|_{H^s}\geq \cK \| u(0) \|_{H^s}$, $\cK \gg 1$, within a time $T$ satisfying polynomial estimates, namely $0<T<\cK^c$ for some $c>0$. 
	\end{abstract}

	\tableofcontents

\section{Introduction}

This paper concerns the existence of spatially quasi-periodic solutions of the cubic defocusing nonlinear Schr\"odinger equation with smooth, time-independent, convolution potentials
\begin{equation*}\label{defNLSpot}
-\mathrm{i} \partial_t u=-\Delta u+W*u+|u|^2 u, \qquad u=u(t, x),
\end{equation*}
 which undergo an arbitrarily large growth in time of their high order Sobolev norms.
 
In recent years the study of the growth of Sobolev norms for solutions of nonlinear Hamiltonian partial differential equations (PDEs) on compact manifolds has drawn wide attention in the mathematical community.
In \cite{Bourgain00b} Bourgain highlights the importance of this problem and conjectures the existence of solutions to the defocusing cubic NLS
\begin{equation}\label{defNLS}
-\mathrm{i} \partial_t u=-\Delta u+|u|^2 u,
\end{equation}
 on the two dimensional torus $\T^2=(\R/2\pi \Z)^2$
 which exhibit an unbounded growth of their $H^s$, $s>1$, Sobolev norms as time goes to infinity. We recall that the $H^1$ norm of solutions to the defocusing NLS is controlled by the energy Hamiltonian for all time. For this reason, a growth in higher order Sobolev norms is necessarily due to a migration of energy from low to high Fourier modes. This phenomenon is usually called \emph{forward energy cascade} and it is central in the context of the weak wave turbulence theory.
 
 The first results of existence of solutions exhibiting growth of Sobolev norms for PDEs on compact spatial domains dates back to the $90$'s. They are due to Bourgain \cite{Bou95,Bou96} for NLS and NLW equations with a spectrally defined Laplacian operator, and Kuksin \cite{Kuksin97b} for a NLS equation with small dispersion. 
 
Nowadays Bourgain's question \cite{Bourgain00b} remains wide open, although results of existence of unbounded solutions to NLS equations have been provided by Hani-Pausader-Visciglia-Tzvetkov \cite{HaniPTV15} on $\R\times \T^2$ and by Hani \cite{Hani12} for a cubic almost-polynomial NLS on $\T^2$. For other models rather than NLS, we refer to \cite{GerardG10}, for an existence result of unbounded solutions of the Szeg\"o equation on $\T$, {and \cite{GGlattice}, for a construction of unbounded transfers of energy orbits in an infinite pendulum lattice by means of Arnold diffusion techniques}. 

In the seminal work \cite{CKSTT} Colliander, Keel, Staffilani, Takaoka and Tao prove the existence of solutions of the defocusing cubic NLS \eqref{defNLS} on $\T^2$ undergoing an arbitrarily large (but finite) growth of their Sobolev norms on a finite range of time. More precisely they prove the following result.
\begin{theorem}{(\cite{CKSTT})}\label{thm:CKSTT}
Let $s>1$ and fix $\cK>0$ large enough, $\mu>0$ small enough. Then there exists a solution $u(t)$ of \eqref{defNLS} on $\T^2$ such that
\begin{equation}\label{pico}
\| u(0)\|_{H^s}\le \mu, \qquad \| u(T)\|_{H^s}\geq \cK
\end{equation}
for some time $T>0$.
\end{theorem}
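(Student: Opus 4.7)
The strategy I would follow is the finite-dimensional reduction pioneered by Colliander--Keel--Staffilani--Takaoka--Tao. Writing $u = \sum_{n \in \Z^2} a_n(t) e^{\mathrm{i} n \cdot x}$ and passing to the interaction picture via $a_n = b_n e^{-\mathrm{i} |n|^2 t}$, equation \eqref{defNLS} becomes
\begin{equation*}
-\mathrm{i} \partial_t b_n = \sum_{n_1 - n_2 + n_3 = n} b_{n_1} \overline{b_{n_2}} b_{n_3} \, e^{\mathrm{i} \omega t}, \qquad \omega = |n_1|^2 - |n_2|^2 + |n_3|^2 - |n|^2.
\end{equation*}
The first step is to average out non-resonant contributions ($\omega \neq 0$) by a Poincar\'e--Birkhoff normal form, obtaining a resonant effective Hamiltonian (the RFNLS) whose interactions are supported on rectangles in $\Z^2$. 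I would then restrict attention to solutions Fourier-supported on a carefully chosen finite set $\Lambda \subset \Z^2$, reducing the infinite-dimensional resonant dynamics to an ODE on $\C^{|\Lambda|}$.

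The combinatorial heart of the argument is the construction of $\Lambda$. I would seek $\Lambda = \Lambda_1 \sqcup \cdots \sqcup \Lambda_N$ with $|\Lambda_j| = 2^{N-1}$ such that: (i) every frequency in $\Lambda_j$ belongs to exactly one resonant rectangle whose two other ``children'' lie in $\Lambda_j$ and whose ``parent'' (the fourth vertex) lies in $\Lambda_{j+1}$; (ii) no spurious rectangles connect $\Lambda$ to itself or to its complement; (iii) the ratio $\sum_{n \in \Lambda_N} |n|^{2s} / \sum_{n \in \Lambda_1} |n|^{2s}$ can be made arbitrarily large by choosing $N$ large. Conditions (i)--(ii) impose a delicate genericity condition whose satisfiability I would establish by successively placing the generations in general position using the many degrees of freedom available in $\Z^2$.

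Conditions (i)--(ii) force the restricted dynamics to respect the symmetry $b_n = b_m$ whenever $n, m$ belong to the same generation, yielding an $N$-dimensional \emph{toy model} for $b_j(t) := b_n(t)$, $n \in \Lambda_j$. The key dynamical statement is then: there exists an orbit of the toy model starting $\varepsilon$-close to the invariant torus $\{|b_3|=1, \text{ others } = 0\}$ and ending $\varepsilon$-close to $\{|b_{N-2}|=1, \text{ others } = 0\}$. I would prove this by observing that the circles $\mathcal T_j := \{|b_j| = 1, \text{ others } = 0\}$ are hyperbolic periodic orbits of the toy model, exhibit non-empty heteroclinic connections $\mathcal T_j \to \mathcal T_{j+1}$ at the quadrupling angle, and chain a shadowing/lambda-lemma argument along the sequence $\mathcal T_3 \to \mathcal T_4 \to \cdots \to \mathcal T_{N-2}$, losing only a controlled factor $\varepsilon$ at each step.

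Finally, one lifts to genuine NLS solutions. Rescaling $b \mapsto \lambda b$ and $t \mapsto \lambda^{-2} t$ preserves NLS, so one can arrange the whole construction at an arbitrarily small amplitude $\lambda$. Choosing initial data with Fourier mass concentrated on $\Lambda_3$ gives $\|u(0)\|_{H^s} \sim \lambda \bigl( \sum_{\Lambda_3} |n|^{2s} \bigr)^{1/2}$; once the orbit migrates to $\Lambda_{N-2}$ it satisfies $\|u(T)\|_{H^s} \sim \lambda \bigl( \sum_{\Lambda_{N-2}} |n|^{2s} \bigr)^{1/2}$, and property (iii) yields the ratio $\mathcal K / \mu$ for $N$ large. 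The main obstacle, which I expect to occupy most of the technical work, is justifying that the true NLS flow tracks the toy-model flow on the (long) timescale needed for the cascade: one must control both the non-resonant remainder discarded in the normal form step and the leakage into modes outside $\Lambda$, on a time interval of length $\sim \lambda^{-2} N$. A bootstrap argument combining local well-posedness in $X^{s,b}$-spaces, sharp bilinear estimates on $\T^2$, and a persistence-of-smallness estimate for the non-$\Lambda$ Fourier coefficients should close the loop, provided the constants in the toy-model shadowing are polynomial in the relevant parameters.
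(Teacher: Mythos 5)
Your proposal follows the CKSTT blueprint that the paper cites: pass to the resonant system RFNLS, build a finite set $\Lambda=\Lambda_1\cup\dots\cup\Lambda_N$ with the right combinatorics, collapse to the $N$-dimensional toy model by intragenerational equality, find a cascade orbit, rescale, and show the genuine NLS flow shadows the toy model in a weak norm. One combinatorial detail is off: a nuclear family is a non-degenerate rectangle with \emph{two} vertices (the parents $n_1,n_3$) in $\Lambda_j$ and \emph{two} (the children $n_2,n_4$) in $\Lambda_{j+1}$; your ``one frequency plus two other children in $\Lambda_j$, one parent in $\Lambda_{j+1}$'' is a $3$--$1$ split and would not reproduce the nearest-neighbor coupling $2\mathrm{i}\,\overline{b_j}\bigl(b_{j-1}^2+b_{j+1}^2\bigr)$ that you correctly write later, nor the toy-model monomials $b_j^2\overline{b_{j+1}}^2$; also each interior mode belongs to \emph{two} nuclear families (one linking up, one linking down), not one. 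Constructing the cascade orbit by shadowing a chain of heteroclinic connections between hyperbolic invariant circles is really the Guardia--Kaloshin refinement; CKSTT build their ``slider'' solution by a more hands-on perturbative argument, but the two are interchangeable for the qualitative content of Theorem \ref{thm:CKSTT}. The remaining steps you sketch --- the Gronwall-type approximation controlling leakage outside $\Lambda$, and the rescaling by $\lambda$ to make the initial $H^s$ norm small while the growth factor comes from $\sum_{\Lambda_{N-2}}|n|^{2s}/\sum_{\Lambda_3}|n|^{2s}$ --- match the cited proof.
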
 
The above result does not answer to Bourgain's conjecture. However, {it establishes the existence} of an interesting dynamical phenomenon. Indeed Theorem \ref{thm:CKSTT} can be seen as a result of strong Lyapunov instability of the origin $u=0$ in the topology of Sobolev spaces. We refer to this property as \emph{long-time instability} of the elliptic equilibrium $u=0$.

In \cite{CKSTT} the authors develop a mechanism to construct unstable solutions with a dynamical system approach. This is based on the derivation of a resonant finite dimensional system, called \emph{toy model}, which approximates the dynamics of the NLS equation over a long time interval and possesses orbits exhibiting a forward energy cascade behavior: at initial time the energy is mostly localized on low modes and later it is transferred to high modes. By a Gronwall argument they prove that there exist solutions to NLS equation which stay close, in an appropriate weak norm, to the energy cascade orbits of the toy model. The vicinity in this weak norm is sufficient to show that these NLS solutions undergo the prescribed growth in their Sobolev norms.

An interesting question concerns bounds for the time at which these solutions achieve the prescribed growth (the time $T$ in Theorem \ref{thm:CKSTT}). Regarding this,
Guardia-Kaloshin \cite{GuardiaK12, GuardiaK12Err} refine the mechanism developed in \cite{CKSTT} and prove that long-time unstable solutions satisfying \eqref{pico} can be constructed in such a way that the time $T$ has a super-exponential upper bound in terms of $\cK/\mu$, that is
\[
0<T \le \exp\left({\left({\cK}/{\mu}\right)^c}\right) \qquad \mathrm{for\,\,some\,\,}c>0.
\] They also show that, if one does not assume smallness on the $H^s$ norm of the initial datum, then one can construct solutions $u(t)$ exhibiting an arbitrarily large growth, say
\[
\| u(T) \|_{H^s}\geq \cK \| u(0)\|_{H^s} \quad  \mathrm{for\,\, an\,\, arbitrarily\,\, large}\,\,\cK>0,
\]
with the time $T$ satisfying a polynomial bound in $\cK$, namely
\[
0<T\le \cK^c\qquad \mathrm{for\,\,some\,\,}c>0.
\]
Moreover, in this case, the solutions can be chosen to have an arbitrarily small $L^2$ norm.

In \cite{Guardia14} Guardia shows that the construction mechanism designed in the aforementioned papers works also for the cubic NLS equation with convolution potentials on $\T^2$. In particular, he shows that the $H^s$ long-time instability property still holds in presence of convolution potentials belonging to $H^{s_0}$ with $s_0>(70/17) s$.

 After these works, many efforts have been made to show that the phenomenon of arbitrarily large growth in Sobolev norms holds for other NLS equations. Haus-Procesi \cite{HPquintic} provide Sobolev instability of the origin for the quintic NLS equation. Guardia-Haus-Procesi \cite{GuardiaHP16} extend the previous results to any NLS with analytic nonlinearity. We mention also \cite{Hani12}, \cite{GuardiaHHMP19} where the phenomenon of the $H^s$ long-time instability, for $s\in (0, 1)$, is proved for different invariant objects (rather than fixed points) of the cubic NLS equation, respectively plane waves and finite-gap quasi-periodic solutions. 
The problem of proving the $H^s$ instability of such invariant objects for $s>1$ remains an interesting open question.

\smallskip

All the aforementioned results concern NLS equations on square tori. 
As highlighted by Staffilani-Wilson \cite{StafWil} (see also \cite{StafWil2}), in the case of irrational tori, that is
\[
\T^2_{(1, \omega)} :=\T_1\times \T_{\omega}, \qquad \mathrm{with}\quad \,\omega\notin\mathbb{Q}, \qquad \mathrm{and} \qquad \T_{\lambda}:=\R/ (2\pi \lambda^{-1} \Z), \,\lambda>0,
\]

 the resonances decouple into products of one dimensional resonances and the mechanism designed in \cite{CKSTT} cannot be applied in a straightforward way. Indeed the derivation of the toy model in \cite{CKSTT}  is based on the presence of non-degenerate $4$-wave resonances, which are not present in the irrational case. To better understand the next arguments, it is useful to go into more detail. The toy model arises as the restriction of the first order resonant NLS Hamiltonian to a finite dimensional subspace $\mathcal{V}_{\Lambda}$, which is defined by setting at zero all the modes which are not indexed by a particular subset $\Lambda\subset \Z^2$. The construction of the set $\Lambda$ is one of the main ingredient of the proof of Theorem \ref{thm:CKSTT}. Indeed, this set possesses several combinatorial properties which makes simpler the study of the resonant toy model. Roughly speaking, in the construction of the set $\Lambda$ the building blocks are the non-degenerate $4$-wave resonances, which can be visualized as the vertices of a rectangle in the $\Z^2$-lattice. However, in the irrational case, the only non-degenerate $4$-wave resonances form rectangles with sides parallel to the axes and these ones are not enough to build the set $\Lambda$ designed in \cite{CKSTT}.
 
Recently, the author of the present paper together with Guardia \cite{GG21} proved the existence of solutions to the cubic NLS on two dimensional irrational tori $\T^2_{(1, \omega)}$ which display: 
\begin{itemize}
\item[(i)] an arbitrarily large growth in Sobolev norms, without assuming smallness in the $H^s$ norm of the initial data, for almost all choices of the irrational spatial frequency $\omega$; 
\item[(ii)] the long-time unstable behavior described in Theorem \ref{thm:CKSTT}, assuming more strict conditions on $\omega$.
\end{itemize}

One of the key arguments of the strategy adopted in \cite{GG21} is to scale the $\Z^2$-lattice in such a way that the set $\Lambda$ of \cite{CKSTT} is mapped into a set with similar combinatorial properties and that is composed by \emph{quasi-resonances}. The choice of the scaling is based on certain results of diophantine approximation theory.  The toy model obtained in this way has the same dynamics of the one introduced in \cite{CKSTT}, but it is a quasi-resonant system. Then one issue is to quantify how far is this system to be resonant. This is needed to ensure that the dynamics of the toy model still approximates the dynamics of NLS over a certain range of time.

%This means that the dynamics of the toy model approximates worse the dynamics of NLS.  
Another fundamental point is to perform a partial version of the Birkhoff normal form procedure. Indeed, on the irrational torus the linear frequencies of oscillations are not integers (as in the square case) and small divisors issues arise when one wants to fully normalize the Hamiltonian of degree four. However, in \cite{GG21} it is proved that it is not necessary to eliminate/normalize all the Hamiltonian terms, but just a part of them which generates a finite dimensional vector field (this procedure is also called \emph{weak} Birkhoff normal form).  This allows to avoid the accumulation to zero of the combinations of the linear frequencies which causes the small divisors problem in the construction of the normalizing change of coordinates.

% \red{This result is based on the use of quasi-resonances, a weak version of the Birkhoff normal form method and results of diophantine approximation.(DA ESTENDERE)}

\smallskip
 
The aim of the present paper is to show that the strategy developed in \cite{GG21} is robust and allows to take into account also the presence of convolution potentials, in the spirit of the work \cite{Guardia14} for the square case.

 In the case in which we assume smallness only on the $L^2$ norm of the initial datum we improve the estimate on the time provided in \cite{GG21} at which the prescribed growth is achieved. In particular, we recover the polynomial time estimates of \cite{GuardiaK12} obtained for the square case. This fact gains importance given that the growth of Sobolev norms is expected to be \emph{weaker} on irrational tori (we refer to \cite{Deng, DengGerm, StafWil}). It would be also interesting to understand how the existence of such solutions, exhibiting a relative fast growth, relates to the results in \cite{StafWil2}, where the time of spreading of energy at high frequencies is given for arbitrarily large initial data with compact Fourier support.

We also analyze how the upper bound on the time depends on $\omega$ when we look at the limit $\omega\to +\infty$, which can be seen, heuristically, as the transition from the NLS problem on a $2d$ rectangular torus to the periodic, completely integrable, $1d$ case.

{The proof of the main results (Theorems \ref{thm:weak}, \ref{thm:strong} below) can be easily adapted\footnote{Along the proof we make some remarks to highlight the major differences between the irrational and  square case. } to the square (and even rational) case to give an alternative proof of the result by Guardia \cite{Guardia14}.

We point out that Theorem \ref{thm:strong} provides the $H^s$ instability of the origin $u=0$ by requiring weaker regularity assumptions on the convolution potentials with respect to \cite{Guardia14}.} Below we discuss how the critical regularity threshold given by Theorem \ref{thm:strong} is connected with the problem of showing the forward energy cascade phenomenon in the vicinity of plane waves and finite gap solutions.

%We think that it could be interesting to understand whether this regularity condition is optimal (or not) by comparing the main results of the present papers with the available results of long time stability for Hamiltonian PDEs on higher dimensional tori \red{cite}

\subsection{Main results}
We consider the defocusing cubic NLS equation on two dimensional irrational tori
\begin{equation}\label{NLS0}
-\mathrm{i} \partial_t u=-\Delta u+W*u+|u|^2 u \qquad u=u(t, x) , \qquad t\in \R, \qquad x=(x_1, x_2)\in \T^2_{(1, \omega)},
\end{equation}
where $W=W(x)$ is a convolution potential with real Fourier coefficients.
We investigate the existence of smooth solutions to \eqref{NLS0} which undergo an arbitrarily large growth in their Sobolev norms as time evolves. Using the Fourier series expansion
\[
u(t, x_1, x_2)=\sum_{n=(j, k)\in \Z^2} u_n(t)\,e^{\mathrm{i} (j x_1+k x_2)}, \quad u_n:=\frac{\omega}{(2\pi)^2} \int_{\T^2_{(1, \omega)}} u(x_1, x_2)\,e^{-\mathrm{i} (j x_1+k x_2)}\,dx_1 d x_2 
\]
we introduce, for $s\geq 0$, the Sobolev spaces
\[
H^s=H^s\left(\T^2_{(1, \omega)}\right)=\left\{ u\in L^2\left(\T^2_{(1, \omega)}\right) : \| u \|^2_{H^s}=\sum_{n\in  \Z^2} |u_n|^2 \langle n \rangle^{2s} <+\infty \right\}, \quad \langle n \rangle:=\max\{ 1, |n| \}.
\]
\noindent\textbf{Notation.} We use the notation $f\lesssim g$ to denote $f\le C g$ whenever $C>0$ is some constant independent of the parameters of the problem. We write $f\sim g$ if there exist two constants $0<C_1<C_2$ such that $C_1 |f|\le |g|\le C_2 |f|$.

\smallskip

We are in position to state the main results of the paper. The first result concerns the existence of solutions of equation \eqref{NLS0} exhibiting an arbitrarily large growth of Sobolev norms. We consider the case of solutions with arbitrarily small $L^2$ norm, without assuming any smallness conditions on the $H^s$ norms, $s>1$, of the initial data.
\begin{theorem}\label{thm:weak}
Let $s>1$, $s_0>0$ and $W\in H^{s_0}(\T^2_{(1, \omega)})$ with real Fourier coefficients. Fix $\cK>0$ large enough. For almost all irrational numbers $\omega>1$ there exists a smooth solution $u(t)$ of the cubic NLS equation \eqref{NLS0} such that 
\begin{equation}\label{weakI}
\| u(0)\|_{L^{2}}\lesssim \cK^{-1}, \qquad \frac{\| u(T) \|_{H^s}}{ \| u(0) \|_{H^s}}\geq {\cK} \qquad \mathrm{for\,\,} 0<T\le (\cK \omega^{s})^c
\end{equation}
for some universal constant $c>0$.
%\item[(ii)]If we do not impose a restriction on the $L^2$ norm of the initial datum then we can construct solutions achieving the growth \eqref{weakI} within the time
%\begin{equation}\label{time2}
%T\lesssim \frac{(\log(\cK\,\omega^s))^2}{s-1}.
%\end{equation}
%\end{itemize}
\end{theorem}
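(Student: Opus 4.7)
The plan is to adapt the approximation strategy from \cite{GG21} (irrational tori, no potential) to accommodate the convolution potential in the spirit of \cite{Guardia14}, and to refine the time bounds along the lines of Guardia-Kaloshin \cite{GuardiaK12}. First I would reproduce the construction from \cite{CKSTT} of a finite set $\Lambda\subset\Z^2$ arranged in $N$ generations with the combinatorial properties that make available the cascade orbit $b^\star(t)$ of the associated toy model. Since on $\T^2_{(1,\omega)}$ with $\omega$ irrational the rectangles in $\Lambda$ are not exact $4$-wave resonances, I would rescale the lattice by an integer factor $M=M(\omega)$ so that they become quasi-resonances with controlled denominator $\delta(\omega)$; by a Diophantine approximation argument as in \cite{GG21}, this is possible for $\omega$ in a full-measure subset of $(1,+\infty)$.

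To reduce the NLS Hamiltonian to the toy-model form I would perform a \emph{weak} Birkhoff normal form that normalizes only the finitely many quartic monomials supported on the modes dynamically generated by $\Lambda$. This keeps the small divisors confined to a finite set of integer combinations of the linear frequencies (now including the shifts $\widehat{W}(n)$ coming from the potential): since only finitely many combinations are involved, the convolution potential can be handled via a mild genericity condition on $\widehat{W}$ (or absorbed into the exceptional $\omega$-set), so no assumption on $s_0$ beyond $s_0>0$ is needed, and the weak scheme avoids the accumulation of small divisors which would otherwise prevent a full normal form on the irrational torus. The restriction of the normalized Hamiltonian to $\mathcal{V}_\Lambda$ is, after a gauge transformation absorbing the $\widehat{W}(n)|u_n|^2$ corrections, the CKSTT toy model.

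The cascade orbit $b^\star$ concentrates mass on successive generations in a time $T_0\sim\log N$, and choosing $N$ so that $2^{(N-3)(s-1)}\gtrsim\cK$ produces the desired ratio $\|u(T_0)\|_{H^s}/\|u(0)\|_{H^s}\gtrsim\cK$ at the toy-model level. A Gronwall-type comparison in a weighted $\ell^2$ norm, driven by the quasi-resonance estimate $\delta(\omega)$ and the finite cardinality of $\Lambda$, transfers the growth to nearby NLS solutions on a time window $T\lesssim\omega^{cs}\log\cK$. Finally, to obtain the $L^2$-smallness and the polynomial bound in \eqref{weakI}, I would exploit the quartic homogeneity of the resonant Hamiltonian: if $b(t)$ is a toy-model solution then so is $\lambda\, b(\lambda^2 t)$. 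Choosing $\lambda\sim\cK^{-1}$ yields a cascade orbit with $\|b(0)\|_{L^2}\lesssim\cK^{-1}$ and the same growth factor in time $\lambda^{-2}T_0\sim\cK^2\log\cK$; redoing the Gronwall comparison with this smaller amplitude gives $T\lesssim\cK^2\omega^{cs}\log\cK\le(\cK\omega^s)^{c'}$.

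The main obstacle I anticipate is the weak Birkhoff normal form in presence of the convolution potential: the shifts $\widehat{W}(n)$ alter the arithmetic of the small divisors and could a priori destroy the quasi-resonance lower bound. Handling this requires carefully using that only finitely many integer combinations of frequencies appear (so a generic, low-regularity $\widehat{W}$ suffices), and tracking how $\delta(\omega)$ deteriorates with the size of $\Lambda$ in order to preserve the polynomial dependence of $T$ on $\omega^s$ rather than a super-exponential one.
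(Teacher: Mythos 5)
Your high-level skeleton (construct $\Lambda$ via an anisotropic rescaling of the CKSTT set along a convergent $(p,q)$ of $\omega$, pass to a weak Birkhoff normal form, compare with a rescaled cascade orbit of the toy model via Gronwall in a suitable sequence norm) does match the paper's architecture. However there are two substantive gaps that would prevent your argument from reaching the conclusion as stated.

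\smallskip

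\textbf{The polynomial time bound is not justified.} You invoke the quartic scaling $b\mapsto \lambda^{-1}b(\lambda^{-2}t)$ and then simply assert that ``redoing the Gronwall comparison with this smaller amplitude gives $T\lesssim\cK^2\omega^{cs}\log\cK$.'' But this is precisely where the difficulty lies: the Gronwall approximation argument in \cite{GG21} (without potential) yields a \emph{super-exponential} bound on $T$ in terms of $\cK$, and repeating it verbatim with a potential gives the same. The paper obtains the polynomial bound through a mechanism you do not mention: the weak Birkhoff change of variables is not merely close to identity, it gains a factor $\mathtt{L}^{-1}$ in the remainder $\mathcal{R}$ (cf. \eqref{bound:remainder}) and in the deviation $\Gamma^{\pm1}-\mathrm{Id}$ (cf. \eqref{bound:GammaId}), where $\mathtt{L}\sim\omega^2 q^2/8-4\sup_n|V_n|$ is the lower bound on the small divisor over $\mathcal{A}(1)$. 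Crucially, $\mathtt{L}$ can be made arbitrarily large by selecting a sufficiently large convergent $q$, and this is what allows one to take $\lambda=2^{sN}$, i.e.\ merely polynomial in $\cK\omega^s$, while still closing the Gronwall estimate (conditions \eqref{cond:final} and \eqref{miss1}--\eqref{miss3}). Without tracking this gain, the error term forces $\lambda$ to be at least of size $e^{c\alpha^N}$ and the time $T=\lambda^2 T_0$ becomes super-exponential. (As a side point, $T_0$ is not $\sim\log N$: by Theorem \ref{thm:orbit} the cascade time satisfies $T_0\lesssim N^2$; and the Gronwall comparison is carried out in $\ell^1$, not a weighted $\ell^2$, because one needs the algebra property.)

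\smallskip

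\textbf{No genericity on $\widehat W$ is needed, and proposing it weakens the statement.} You suggest handling the frequency shifts $\widehat W(n)$ by ``a mild genericity condition on $\widehat W$ (or absorbing into the exceptional $\omega$-set).'' Theorem \ref{thm:weak}, however, holds for \emph{every} $W\in H^{s_0}(\T^2_{(1,\omega)})$ with real Fourier coefficients, with no smallness or non-resonance hypothesis (see the remark following the theorem). The point is that the potential contribution is bounded, $|\Omega_V(n_1,\dots,n_4)|\le 4\sup_n|V_n|$, while for quartets in $\mathcal{A}(1)$ the kinetic part satisfies $|\Omega_\omega|\ge p^2/2\ge\omega^2 q^2/8$, which dominates $4\sup_n|V_n|$ once $q$ is large (Lemma \ref{lem:L1}). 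For the quasi-resonances in $\mathcal{A}(0)$, one instead uses that the modes in $\Lambda$ have size $\gtrsim qR$, so by the decay $|V_n|\lesssim|n|^{-s_0}$ the correction $|\Omega_V|\lesssim(qR)^{-s_0}$ is tiny and is absorbed into $\vartheta$ (Lemma \ref{lem:U0}). Thus the potential never interacts with the arithmetic of the small divisors in a way that requires genericity; if you introduce a genericity condition on $\widehat W$ you prove a strictly weaker result. Also note that $\mathcal{A}(0)$ and $\mathcal{A}(1)$ are handled quite differently --- the former must be \emph{almost} resonant, the latter \emph{far} from resonant --- and the same enlargement of $q$ that degrades the former estimate improves the latter; your sketch does not keep these two regimes apart.
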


\begin{remark}
The result given by Theorem \ref{thm:weak} for $\omega=1$ is provided by Theorem $1.4$ in \cite{Guardia14}.
\end{remark}

\begin{remark}
As in the square case, we are able to construction solutions exhibiting arbitrarily large growth for any potential which is slightly more regular than $L^2$.
We do not need to assume any smallness conditions on $W$, nor resonant conditions on its Fourier coefficients. For our purpose it is enough to have a weak decay of the Fourier coefficients. Then the growth of the $H^s$ norms is achieved for open sets of potentials.
\end{remark}

Theorem \ref{thm:weak} should be compared with Theorem $2.1$ in \cite{GG21}, where the same result is provided in absence of the potential ($W\equiv 0$).
In \cite{GG21} the bound on time $T$ is super-exponential in the factor $\cK$, while in \eqref{weakI} is polynomial. We remark that this is the same type of upper bound provided by Guardia-Kaloshin \cite{GuardiaK12} in the square case.
The key argument used here to obtain the improved bound relies on the normal form procedure described in Section \ref{sec:NF}. We construct a change of coordinates which eliminates part of the NLS Hamiltonian, simplifying the analysis of its first order close to the origin. The drawback is that new higher order terms appear in the Hamiltonian. We refer to them as the remainder of the normal form procedure (it is called $\mathcal{R}$ in Proposition \ref{prop:wbnf}). The size of the remainder affects the time $T$.
We shall fully exploit the fact that the aforementioned change of coordinates is identity modulo a smoothing term\footnote{This is a consequence of having good lower bounds on certain combinations of linear frequencies (see Lemma \ref{lem:L1}).}. The smoothing effect is used to gain some smallness on the size of the remainder (see the bound \eqref{bound:remainder}), ensuring a better bound on $T$. 
 
 % we have good lower bounds on the resonant $4$-wave interactions involved in the partial Birkhoff normal form procedure of Proposition \ref{prop:wbnf}. 

It is natural to expect a blow up of the time $T$ in \eqref{weakI} as the ratio of the spatial frequencies tends to infinity (that is $\omega\to+\infty$). Heuristically, this limit should resemble the situation of the NLS equation in dimension one where the phenomenon of growth of Sobolev norms is prevented by the complete integrability.  The above theorem shows that the time $T$ grows at most polynomially with respect to $\omega$. 

\medskip

As commented above, for the solutions provided in Theorem \ref{thm:weak} we do not assume the smallness of the $H^s$ norms with $s>1$ at initial time.  Assuming more strict conditions on the spatial frequency $\omega$ and on the regularity of the convolution potentials $W$, we can ensure the existence of solutions of \eqref{NLS0} exhibiting the long-time instability of $u=0$, as in Theorem \ref{thm:CKSTT}.
\begin{theorem}\label{thm:strong}
Let $s>1$, $s_0>2 s$ and $W\in H^{s_0}(\T^2_{(1, \omega)})$ with real Fourier coefficients. There exists a set $\widetilde{\mathcal{S}}_{2s}\subset[1, +\infty)$, which has  Hausdorff dimension $(1+s)^{-1}$, such that for all $\omega\in \widetilde{\mathcal{S}}_{2s}$ the following holds.

Fix $\cK>0$ large enough and $\mu>0$ small enough. There exists a smooth solution $u(t)$ of the cubic NLS equation \eqref{NLS0} such that 
\begin{equation}\label{bound:uppermu}
\| u(0)\|_{H^s}\le \mu, \qquad \| u(T)\|_{H^s}\geq \mathcal{K}.
\end{equation}
Moreover, for any $\tau>2s$, there exists a subset $\widetilde{\mathcal{S}}_\tau\subset \widetilde{\mathcal{S}}_{2s}$ with Hausdorff dimension $2(2+\tau)^{-1}$, such that if $\omega \in \widetilde{\mathcal{S}}_\tau$, the time $T$ satisfies
\begin{equation}\label{time2}
% \exp\left(\,\frac{1}{\tau-s}\left(\frac{\mathcal{C}}{\mu}\right)^{\beta_1}\right) \le 
0<T\le\exp\left(\,\frac{1}{\tau-2s}\left(\frac{\mathcal{K}\,\omega^s}{\mu}\right)^{\beta}\right) 
\end{equation}
for some constants $\beta=\beta(s)>0$.
\end{theorem}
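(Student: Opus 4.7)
The plan is to combine the strategy of \cite{GG21} for irrational tori with the Birkhoff normal form analysis of \cite{Guardia14} that accounts for the potential. Writing $u$ in Fourier series, the linear frequencies become $\Omega_n=\alpha_n+\widehat W_n$, where $\alpha_n=j^2+\omega^2 k^2$ is the Laplacian eigenvalue associated to the mode $n=(j,k)\in\Z^2$. As in \cite{CKSTT,GuardiaK12,GG21}, the first step is to construct a finite set $\Lambda\subset \Z^2$ organised into $N$ generations, each generation made of rectangles with the combinatorial properties that guarantee the existence of a forward cascade orbit in the associated toy model. Since the only genuine $4$-wave resonances on $\T^2_{(1,\omega)}$ are axis-parallel rectangles, I would follow \cite{GG21} and rescale the abstract CKSTT set $\Lambda$ by a large integer so that its rectangles become \emph{quasi}-resonant, with an explicit lower bound on the corresponding frequency combinations. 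This rescaling is the source of the diophantine conditions on $\omega$ and of the sets $\widetilde{\mathcal{S}}_{2s}$ and $\widetilde{\mathcal{S}}_\tau$; their Hausdorff dimensions $(1+s)^{-1}$ and $2(2+\tau)^{-1}$ should come, exactly as in \cite{GG21}, from classical Jarn\'\i k--Besicovitch type estimates matching the diophantine exponent to the decay rate imposed on the quasi-resonant divisors.

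Next I would perform a weak Birkhoff normal form (Proposition \ref{prop:wbnf}), transforming the quartic Hamiltonian into a resonant part supported on $\Lambda$ plus a higher-order remainder $\mathcal{R}$. The small divisors to be inverted have the form
\begin{equation*}
\Omega_{n_1}-\Omega_{n_2}+\Omega_{n_3}-\Omega_{n_4}=\bigl(\alpha_{n_1}-\alpha_{n_2}+\alpha_{n_3}-\alpha_{n_4}\bigr)+\bigl(\widehat W_{n_1}-\widehat W_{n_2}+\widehat W_{n_3}-\widehat W_{n_4}\bigr),
\end{equation*}
and a nonzero quantitative lower bound must hold on the $4$-tuples selected by the weak procedure. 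Using $|\widehat W_n|\lesssim \langle n\rangle^{-s_0}$ together with the quasi-resonance bound on the first parenthesis coming from the rescaling step, one obtains such a bound provided the potential contribution cannot cancel the quadratic one, which is exactly what the assumption $s_0>2s$ ensures. This explains the threshold in the statement and improves on the $s_0>(70/17)s$ of \cite{Guardia14}: the weak procedure only needs to control divisors within a finite, structured family of $4$-tuples rather than all of them, so a much cruder decay of $\widehat W_n$ is enough. The normalizing transformation is then the time-$1$ flow of a smoothing Hamiltonian $F$, and this smoothing is what keeps $\mathcal{R}$ small in the relevant norms (as in the bound on the remainder used to refine Theorem \ref{thm:weak}).

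Finally I would restrict the normalized Hamiltonian to the finite-dimensional invariant subspace $\mathcal{V}_\Lambda$ to obtain the toy model; by the combinatorics imposed on $\Lambda$ it admits an orbit along which the mass travels from the first to the last generation with the $H^s$ norm growing by a factor $\gg \mathcal{K}/\mu$, which, after an appropriate rescaling of the amplitudes to accommodate the upper bound $\|u(0)\|_{H^s}\le \mu$, gives \eqref{bound:uppermu}. A Gronwall comparison between the true NLS flow and the rescaled toy-model orbit in the $\ell^1$ weak norm, in the spirit of \cite{CKSTT,GuardiaK12,Guardia14,GG21}, transports this growth to the original equation over the time window on which $\mathcal{R}$ and the quasi-resonance error remain controlled; optimising this window against the diophantine exponent $\tau$ yields \eqref{time2}, with the factor $\omega^s$ tracking the scale of the modes in the rescaled $\Lambda$ and the divisor $\tau-2s$ reflecting the sharpness of the diophantine condition. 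I expect the main obstacle to be precisely the small-divisor estimate of the middle step: obtaining a clean lower bound on the potential-shifted combinations under the weakest possible assumption on $s_0$, uniformly over all $4$-tuples produced by the weak normal form, is the technical heart of the argument, and the rest of the scheme adapts from \cite{GG21} and \cite{Guardia14} in a comparatively routine way.
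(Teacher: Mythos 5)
Your high-level outline matches the paper's, but you have mislocated where the crucial hypothesis $s_0>2s$ enters, and this is a genuine conceptual gap, not a cosmetic one. You attribute $s_0>2s$ to the small-divisor \emph{lower} bound needed to justify the weak Birkhoff normal form — i.e.\ to $\mathcal{L}_1=\inf_{\mathcal{A}(1)}|\Omega_{\omega,V}|$ — arguing that the potential must not cancel the quadratic part. But in the paper this lower bound (Lemma \ref{lem:L1}) is immediate from the scaling: for four-tuples with exactly one mode outside $\Lambda$ the Laplacian contribution is already $\gtrsim \omega^2 q^2$, and the potential shift is crudely bounded by $4\sup_n|V_n|$, which is swallowed as soon as $q$ is large. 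No condition $s_0>2s$ appears there; indeed Theorem \ref{thm:weak} holds for any $s_0>0$. Where $s_0>2s$ actually bites is in the \emph{upper} bound $\mathcal{U}_0$ (Lemma \ref{lem:U0}): the $(p,q)$-families in $\Lambda$ are only quasi-resonant, and the contribution of $V$ to their near-resonance is $\sim (qR)^{-s_0}$. The approximation argument (Proposition \ref{prop:approxarg2}) requires $\vartheta\lesssim\lambda^{-2(1+\epsilon)}$, and — this is the point — the constraint $\|u(0)\|_{H^s}\sim\mu$ forces $\lambda\sim (qR)^s/\mu$ (see \eqref{lowuplambda}), so $\lambda^{-2(1+\epsilon)}\sim(qR)^{-2s(1+\epsilon)}\mu^{2(1+\epsilon)}$. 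Requiring $(qR)^{-s_0}\lesssim(qR)^{-2s(1+\epsilon)}$ gives exactly $s_0>2s$; the twin condition $\tau>2s$ arises the same way from the first term of $\vartheta$. So the threshold is dictated by how small the potential must make the quasi-resonance error relative to the prescribed initial amplitude, not by the Birkhoff small divisors.

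Two further points worth correcting. First, you expect the "main obstacle" to be the small-divisor lower bound; in the paper this is the easy part, and the technical heart is instead showing that $\lambda$ and $q$ can be chosen simultaneously to satisfy the approximation argument's assumptions \eqref{cond:final2} \emph{and} the amplitude normalization \eqref{cond:roma}, which is the compatibility analysis \eqref{cond:compat}–\eqref{lowbound:q}. Second, you invoke the smoothing effect of the Birkhoff map to control the remainder $\mathcal{R}$; this is what powers the \emph{polynomial} time bound in Theorem \ref{thm:weak}, but for Theorem \ref{thm:strong} the paper explicitly discards it (one can take $\mathtt{L}=1$) and uses the weaker approximation argument (Proposition \ref{prop:approxarg2}), which is why only a super-exponential bound on $T$ is obtained. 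Your sketch would need to be reworked to reflect these distinctions before it could serve as a proof.
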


\begin{remark}
Theorem \ref{thm:strong} also includes the case $\omega=1$, hence it extends Theorem $1.5$ in \cite{Guardia14}, where the potentials are required to be in $H^{s_0}$ with $s_0>(70/17)s$.
\end{remark}
\begin{remark}
{With respect to Theorem \ref{thm:weak}, we are just able to prove that the time $T$ grows super-exponentially with respect to $\omega$.} 
\end{remark}

It is not clear whether the regularity required in Theorem \ref{thm:strong} on the convolution potential is optimal or not. However, it is interesting to notice that the difficulty in improving this threshold seems related to the (unsolved) problem of proving $H^s$ instability, for $s>1$, of plane waves \cite{Hani12} and finite gap solutions \cite{GuardiaHHMP19} of the cubic NLS on $\T^2$ (at least if one tries to follow the strategy of \cite{CKSTT}). Indeed, in such cases the asymptotic of the linear frequencies of oscillations is roughly given by 
\[
\Omega(n)=|n|^2+O\left(\frac{1}{| n|^2}\right),
\] which could be seen, at first orders, as the eigenvalues of $-\Delta+W*$ with a convolution potential $W\in H^2$. This is in fact the critical case for Theorem \ref{thm:strong}.

The sets of frequencies $\widetilde{S}_{\tau}$ in Theorem \ref{thm:strong} are the same considered in \cite{GG21}. To describe them explicitly we need some preliminary definitions.
\begin{definition}\label{def:psiapprox}
Let $\psi\colon \N\to \R^+:=[0, \infty)$ be a decreasing function. We say that $\omega\in \R$ is $\psi$-approximable if there exist infinitely many $(p, q)\in \Z\times\N$ such that
\[
\left|\omega-\frac{p}{q} \right|\le \frac{\psi(q)}{q}.
\]
We say that $(p, q)$ is a $\psi$-convergent of $\omega$.
\end{definition}

\begin{definition}\label{def:Wpsi}
Let $\psi\colon \N \to \R^+$ be a decreasing function.
Let us define
\[
W(\psi)=\left \{ \omega\in [1, \infty) : \omega\,\,\mathrm{\,\,is}\,\,\psi\,-\,\text{approximable}\right \}.
\]
\end{definition}

The set $\widetilde{S}_{\tau}$ is the set of frequencies $\omega\geq 1$ that are $\psi$-approximable with
\[
\psi(q)=\frac{\mathtt{c}}{q^{1+\tau}}
\]
for some constant $\mathtt{c}\geq 1$. By Khinchin theorem \cite{Kinch} these sets have zero Lebesgue measure. The Hausdorff dimension is provided by the Jarn\'ik-Besicovitch Theorem \cite{Besi, Jarnik}.

\subsection{Plan of the paper.} Section \ref{sec:proof} is devoted to the proof of Theorems \ref{thm:weak}, \ref{thm:strong}. For technical convenience we first manipulate equation \eqref{NLS0} and we provide a suitable Hamiltonian setting to work with.
 In Section \ref{sec:initialdatum} we show how to construct the Fourier support of the initial data of solutions exhibiting the Sobolev norms explosion. In Section \ref{sec:NF} we apply a partial normal form procedure which allows to derive the toy model and to obtain the improved bound \eqref{weakI} on the time $T$ in Theorem \ref{thm:weak}. In Section \ref{sec:resmodel} we derive the finite dimensional toy model, as a quasi-resonant subsystem of the normalized Hamiltonian, and we provide the existence of forward energy cascade orbits for it. The first difference in the proof of the two main results (Theorems \ref{thm:weak}, \ref{thm:strong} ) comes with the \emph{approximation argument} (that is given in Propositions \ref{prop:approxarg} and \ref{prop:approxarg2}), namely when we have to show that, under appropriate assumptions, there exist solutions of the NLS equation which stay close to the orbits of the toy model constructed in Section \ref{sec:resmodel}. Since Proposition \ref{prop:approxarg2} follows closely the approach used in \cite{GG21}, we first prove in full details Proposition \ref{prop:approxarg} in Section \ref{sec:approxarg} and we conclude the proof of Theorem \ref{thm:weak} in Section \ref{sec:conclusion}. Eventually, in Section \ref{sec:thm2}, we conclude the proof of Theorem \ref{thm:strong}.

\textbf{Acknowledgments} The author wishes to thank M. Procesi, M. Guardia and R. Scandone for useful comments and discussions. The author has received funding from INdAM-GNAMPA, Project CUP E55F22 000270001.

\section{Proof of main results}\label{sec:proof}
By scaling the spatial variable $x\in \T^2_{(1, \omega)}\rightsquigarrow (1, \omega)\cdot x\in \T^2$ the problem of finding solutions of the equation \eqref{NLS0} with spatial frequency vector $(1, \omega)$ reduces to the search for solutions of the following equation on the square torus (with abuse of notation we rename $x$ the scaled variable)
\begin{equation}\label{NLS}
-\mathrm{i} \partial_t u=-\Delta_{\omega} u+V*u+|u|^2 u, \qquad x=(x_1, x_2)\in \T^2,
\end{equation}
where $\Delta_{\omega}$ is the Laplacian operator defined by
\[
\Delta_{\omega} \,e^{\mathrm{i} n \cdot x}=|n|_{\omega}^2 \,e^{\mathrm{i} n \cdot x}, \qquad |n|_{\omega}^2=j^2+\omega^2 k^2 \qquad \forall n=(j, k)\in \Z^2
\]
and $V$ is the scaled potential $V(x_1, x_2)=W(x_1, \omega^{-1} x_2)$. 
Let us consider the Fourier series
\[
u(t, x)=\sum_{n\in \Z^2} u_n(t)\,e^{\mathrm{i} n \cdot x}, \qquad u_n:=\frac{1}{2\pi} \int_{\T^2} u(x)\,e^{-\mathrm{i} n \cdot x}\,dx.
\]

The equation \eqref{NLS} possesses a Hamiltonian structure given by $H=H^{(2)}+H^{(4)}$ with
\begin{align*}
H^{(2)}&= \sum_{n\in \Z^2} (|n|^2_{\omega}+V_n) \,|u_n|^2, \qquad H^{(4)}=\frac{1}{2} \sum_{n_1-n_2+n_3-n_4=0} u_{n_1} \overline{u_{n_2}} u_{n_3} \overline{u_{n_4}},
\end{align*}
associated with the symplectic form $\mathrm{i} \sum_{n\in \Z^2} d u_n\wedge d \overline{u_n}$. 

\smallskip

\noindent\textbf{Notation.} We shall denote by $X_H$ and $\Phi_H^t$ respectively the vector field and the flow generated by a Hamiltonian $H$. We denote by $D X_H$ the linearized vector field of $H$.

\smallskip

The $L^2$ norm of solutions to the NLS equation \eqref{NLS} is preserved, or equivalently we have that the mass Hamiltonian
\[
\mathcal{M}=\sum_{n\in \Z^2} |u_n|^2
\]
Poisson commutes with the energy Hamiltonian $H$. We observe that
\[
\sum_{n_1-n_2+n_3-n_4=0} u_{n_1} \overline{u_{n_2}} u_{n_3} \overline{u_{n_4}}=\sum_{\substack{n_1-n_2+n_3-n_4=0\\ n_1\neq n_2, n_4}} u_{n_1} \overline{u_{n_2}} u_{n_3} \overline{u_{n_4}}+2 \sum_{n\neq m} |u_{n}|^2 |u_m|^2+\sum_{n\in \Z^2} |u_n|^4.
\]
Then
\[
H^{(4)}-\mathcal{M}^2=-\frac{1}{2} \sum_{n\in \Z^2} |u_n|^4+\frac{1}{2}\sum_{\substack{n_1-n_2+n_3-n_4=0\\ n_1\neq n_2, n_4}} u_{n_1} \overline{u_{n_2}} u_{n_3} \overline{u_{n_4}}.
\]
Since $H$ and $\mathcal{M}^2$ commutes, if $u(t, x)$ is a solution of the Hamiltonian system given by $H$, $z=\Phi_{-\mathcal{M}^2}^t(u)=e^{-2 \mathrm{i} \mathcal{M} t}u$ is a solution of the Hamiltonian system given by
\begin{equation}\begin{aligned}\label{calH}
&\mathcal{H}=\mathcal{H}^{(2)}+\mathcal{H}^{(4)},\\
&\mathcal{H}^{(2)}= \sum_{n\in \Z^2} (|n|^2_{\omega}+V_n) \,|z_n|^2, \qquad \mathcal{H}^{(4)}=-\frac{1}{2} \sum_{n\in \Z^2} |z_n|^4+\frac{1}{2}\sum_{\substack{n_1-n_2+n_3-n_4=0\\ n_1\neq n_2, n_4}} z_{n_1} \overline{z_{n_2}} z_{n_3} \overline{z_{n_4}}.
\end{aligned}
\end{equation}
From now on we shall work with sequence spaces.
Consider the Sobolev spaces
	\[
	{H}^s=\left\{ z : \Z^2 \to \mathbb{C} : \| z \|_s^2= \sum_{n\in \Z^2} |z_n|^2 \langle n \rangle^{2 s}<+\infty \right\} \qquad 
	\mathrm{with}
\qquad
	\langle n \rangle:=\max\{ 1, |n|\},
	\]
	and the sequence space
	\[
	\ell^1=\left\{ z : \Z^2 \to \mathbb{C} : \| z \|_{\ell^1}= \sum_{n\in \Z^2} |z_n|<+\infty \right\}.
	\]
We remark that $\ell^1$ is an algebra for the convolution between sequences. We shall prove the existence of solutions of the Hamiltonian system given by \eqref{calH} exhibiting a large growth in the Sobolev norms $\| \cdot \|_s$ for $s>1$, but with $\ell^1$ norm which remains relatively small all the time. This allows to use the powerful tool of perturbation theory as normal forms.

We remark that, since $V\in H^{s_0}(\T^2)$, we have the following decay property on the Fourier coefficients
\begin{equation}\label{decaypot}
|V_n|\lesssim |n|^{-s_0} \qquad \forall n\in \Z^2.
\end{equation}

	\subsection{The set $\Lambda$}\label{sec:initialdatum} 
	The solutions that we are going to construct are compactly Fourier supported at initial time. We call $\Lambda$ the finite Fourier support of the initial datum. 
	 
	In this section we discuss the construction of the set $\Lambda$. This section follows the lines of Section $4$ of \cite{GG21}, where the set $\Lambda$ is constructed by scaling appropriately the $\Lambda$-set considered in \cite{CKSTT}. Here we adopt the same strategy, but it is crucial to choose a different rescaling which takes into account the correction to the Schr\"odinger linear frequencies given by the presence of the convolution potential.

	\smallskip
	
	To make no confusion we rename the $\Lambda$-set considered in \cite{CKSTT} as $\tilde{\Lambda}$.
	
	Following \cite{CKSTT} and \cite{GuardiaHHMP19}, the set $\tilde{\Lambda}$ can be decomposed as the disjoint union of sets called \emph{generations} $\tilde{\Lambda}_i$ 
	\[
	\tilde{\Lambda}=\tilde{\Lambda}_1\cup\dots\cup \tilde{\Lambda}_N.
	\]
	The number of generations $N$ is a parameter of the problem which depends on the prescribed growth to achieve. 
	
	We say that a quartet $(\tilde{n}_1, \tilde{n}_2, \tilde{n}_3, \tilde{n}_4)$ is a \emph{nuclear family} if $\tilde{n}_1, \tilde{n}_3\in \tilde{\Lambda}_i$ and $\tilde{n}_2, \tilde{n}_4\in \tilde{\Lambda}_{i+1}$ for some $i=1, \dots, N-1$ and they form a non-degenerate rectangle in the $\Z^2$-lattice.

	We recall the properties of the set $\tilde{\Lambda}$ constructed in \cite{CKSTT, GuardiaHHMP19}:
	\begin{itemize}
		\item[$(P1)$] (Closure): If $\tilde{n}_1, \tilde{n}_2, \tilde{n}_3\in\tilde{\Lambda}$ are three vertices of a rectangle then the fourth vertex belongs to $\tilde{\Lambda}$ too.
		\item[$(P2)$] (Existence and uniqueness of spouse and children): For each $1\le i\le N-1$  and every $\tilde{n}_1\in\tilde{\Lambda}_i$ there exists a unique spouse $\tilde{n}_3\in \tilde{\Lambda}_i$ and unique (up to trivial permutations) children $\tilde{n}_2, \tilde{n}_4\in\tilde{\Lambda}_{i+1}$ such that $(\tilde{n}_1, \tilde{n}_2, \tilde{n}_3, \tilde{n}_4)$ is a nuclear family in $\tilde{\Lambda}$.
		\item[$(P3)$] (Existence and uniqueness of parents and sibling): For each $1\le i\le N-1$  and every $\tilde{n}_2\in\tilde{\Lambda}_{i+1}$ there exists a unique sibling $\tilde{n}_4\in\tilde{\Lambda}_{i+1}$ and unique (up to trivial permutations) parents $\tilde{n}_1, \tilde{n}_3\in\tilde{\Lambda}_{i}$ such that $(\tilde{n}_1, \tilde{n}_2, \tilde{n}_3, \tilde{n}_4)$ is a nuclear family in $\tilde{\Lambda}$.
		\item[$(P4)$] (Non-degeneracy): A sibling of any mode $m$ is never equal to its spouse.
		\item[$(P5)$] (Faithfulness): Apart from nuclear families, $\tilde{\Lambda}$ contains no other rectangles.
		{ \item[$(P6)$] If four points $\tilde{n}_1, \tilde{n}_2, \tilde{n}_3, \tilde{n}_4$ in $\tilde{\Lambda}$ satisfy $\tilde{n}_1-\tilde{n}_2+\tilde{n}_3-\tilde{n}_4=0$ then either the relation is trivial or such points form a family.}
	\end{itemize}
	
	The nuclear families are the building blocks of the set $\tilde{\Lambda}$ and they are non-degenerate resonant quartets for the cubic NLS on the square torus. 	
	However, on the irrational torus $\T^2_{(1, \omega)}$ they are not resonances.
	We scale the $\Z^2$-lattice with an anisotropic dilation, {which depends on} the spatial frequency $\omega$, such that the rescaled nuclear families are close enough to be resonances for the equation \eqref{NLS}. 
	We consider the linear map defined by the matrix
	\begin{equation}\label{B}
		B=\begin{pmatrix}
			p & 0\\
			0 & q
		\end{pmatrix},
	\end{equation}
	where $(p, q)\in \Z^2$ is a $\psi$-convergent of $\omega$ according to the Definition \ref{def:psiapprox}. For the moment, on the function $\psi$ we just require that
	\begin{equation}\label{cond:psi}
	n \mapsto n\, \psi(n) \quad \mathrm{is\,\,monotone\,\,decreasing}.
	\end{equation}
			\begin{remark}\label{rem:monza}
		Recalling definition \ref{def:psiapprox}, we have
	$
	2^{-1} {\omega q}\le p\le 2 \omega q.
	$
	\end{remark}
	We choose $q$ in \eqref{B} such that (recall $V\in H^{s_0}$)
%	\begin{equation}\label{M}
%		q\geq M
%	\end{equation}
%	with $M\in \mathbb{N}$ large enough and such that
	\begin{equation}\label{cond:M}
		\mathtt{L}:=\frac{\omega^2}{8} q^2-4 \sup_{n\in \Z^2} |V_n|\,\,\mathrm{is\,\,large\,\,enough}, \qquad  q^{-s_0}\,\,\mathrm{is\,\,small\,\,enough}.
	\end{equation}
	In the following we will give explicit quantitative conditions on $\mathtt{L}$ and $q$.

	{\begin{remark}{(Case $\omega=1$)}
	In the square case one has just to consider an isotropic scaling, namely consider $B=q \mathrm{I}$, where $\mathrm{I}$ is the $2\times 2$ identity matrix. 
	\end{remark}}
	
	The matrix $B$ defines a scaling of the lattice
	\begin{equation*}\label{scalingLattice}
		n=(j, k)\in \Z^2 \to B\,n=(p j, q k) \in p\Z\times q\Z.
	\end{equation*} 
	For each mode $\tilde{n}_i\in \tilde{\Lambda}$ we define $n_i=B \tilde{n}_i$. Since $B$ is invertible we have a one-to-one correspondence between the generations $\tilde{\Lambda}_i\subset \Z^2$ and the image of such sets through $B$, that we denote by $\Lambda_i\subset p\Z \times q \Z$. We define
	\[
	\Lambda:=\Lambda_1 \cup \dots \cup \Lambda_N.
	\]
	The image of the nuclear families through $B$ are $(p, q)$-families according to the following definition.
	
	\begin{definition}\label{def:pqfamily}
		A  $(p, q)$-\emph{family} is a non-degenerate parallelogram with vertices $n_1, n_2, n_3, n_4\in\Lambda$ which satisfy
		\begin{equation}\label{respq}
			\begin{aligned}
				&n_1-n_2+n_3-n_4=0,\\
				&\langle B^{-2} (n_1-n_2), n_2-n_3\rangle=0.
			\end{aligned}
		\end{equation}
	\end{definition}
	
	It is easy to see that we can replace the properties $(P1)$, $(P5)$ and $(P6)$ with the following:
	\begin{itemize}
		\item[$(P1')$] (Closure): If $n_1, n_2, n_3\in\Lambda$ are three vertices of a non-degenerate parallelogram satisfying \eqref{respq} then the fourth vertex belongs to $\Lambda$ too.
		\item[$(P5')$] (Faithfulness): Apart from $(p, q)$-families there are no other non-degenerate parallelogram satisfying \eqref{respq}.
		\item[$(P6')$] If four points $n_1, n_2, n_3, n_4$ in ${\Lambda}$ satisfy ${n}_1-{n}_2+{n}_3-{n}_4=0$ then either the relation is trivial or such points form a $(p, q)$-family.
	\end{itemize}
	
	Thanks to the above properties the only $(p, q)$-families appearing in $\Lambda$ are image of nuclear families of $\tilde{\Lambda}$.
	We call
	\[
	S_i:=\sum_{n\in \Lambda_i} |n|^{2s} \qquad i=1, \dots, N.
	\]
	Recalling that $\omega\geq 1$ and by Remark \ref{rem:monza}, we have, for $n=B \tilde{n}$,
	\[
	 q^2\, |\tilde{n}|^2 \le |n|^{2}\le p^2\, |\tilde{n}|^2\le 4 \omega^2 q^2 |\tilde{n}|^2.
	\]
	Then Theorem $4.1$ in \cite{GG21} implies the following.
	
	\begin{theorem}\label{thm:gen}
		Fix any $\tilde{\eta}>0$ small and let $s>1$. Then, there exists $\alpha>0$ large enough such that for any $N>0$ large enough and any $p,q\in\mathbb{N}$, there exists a set $\Lambda\subset p\Z\times q \Z$ with
		\[
		\Lambda:=\Lambda_1\cup\dots\cup\Lambda_N,
		\]
		which satisfies conditions $(P1'), (P2),(P3),(P4),(P5'), (P6')$ and also
		\begin{equation*}
			\frac{S_{N-2}}{S_3}\gtrsim \,2^{(s-1)(N-4)}\,\omega^{-2s}.
		\end{equation*}
		Moreover, we can ensure that each generation $\Lambda_i$ has $2^{N-1}$ disjoint frequencies satisfying 
		% (recall $R$ in Theorem \ref{thm:Iteam})
		\begin{equation}\label{bound:gen}
			\frac{S_j}{S_i}\lesssim\,e^{sN} \omega^{-2s},
		\end{equation}
		for any $1\le i< j\le N$, and 
		\begin{equation}\label{def:R}
			C^{-1}\,q\, R\le  |{n}| {\le C\,\omega\,q\,3^N\, R}, \qquad \forall {n}\in {\Lambda}_i, \qquad i=1, \dots, N,
		\end{equation}
		where $C>0$ is independent of $N$ and
		$R=R(N)$ satisfies
		% \end{equation}
	\[
	e^{\alpha^N}\leq R \leq e^{2(1+\tilde{\eta})\alpha^N}.
	\]

\end{theorem}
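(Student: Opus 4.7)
The plan is to reduce the theorem to its counterpart in \cite{GG21} (itself a reformulation of the classical construction in \cite{CKSTT}) by exploiting the fact that the diagonal anisotropic scaling $B=\mathrm{diag}(p,q)$ is linear and invertible, so it maps the \emph{rectangular} combinatorial structure on $\Z^2$ bijectively onto the $(p,q)$-parallelogram structure on $p\Z\times q\Z$. Concretely, I would first invoke the existence of the set $\tilde{\Lambda}=\tilde{\Lambda}_1\cup\dots\cup \tilde{\Lambda}_N\subset \Z^2$ satisfying $(P1)$--$(P6)$, together with the quantitative estimates $\tilde{S}_{N-2}/\tilde{S}_3\gtrsim 2^{(s-1)(N-4)}$, $\tilde{S}_j/\tilde{S}_i\lesssim e^{sN}$, and $R\le |\tilde{n}|\le 3^N R$ for $\tilde{n}\in\tilde{\Lambda}$, with $R$ in the prescribed double-exponential window. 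Then I would simply define $\Lambda_i := B\,\tilde{\Lambda}_i$ and $\Lambda:=\Lambda_1\cup\dots\cup\Lambda_N$.

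The core algebraic observation driving the proof is that, since $B$ is symmetric, for any $\tilde{n}_i\in\Z^2$ and $n_i=B\tilde{n}_i$ one has
\[
\langle B^{-2}(n_1-n_2),\,n_2-n_3\rangle = \langle B^{-1}B(\tilde{n}_1-\tilde{n}_2),\,B(\tilde{n}_2-\tilde{n}_3)\rangle = \langle \tilde{n}_1-\tilde{n}_2,\,\tilde{n}_2-\tilde{n}_3\rangle,
\]
together with $n_1-n_2+n_3-n_4 = B(\tilde{n}_1-\tilde{n}_2+\tilde{n}_3-\tilde{n}_4)$. So $(n_1,n_2,n_3,n_4)$ is a $(p,q)$-family in the sense of Definition \ref{def:pqfamily} if and only if $(\tilde{n}_1,\tilde{n}_2,\tilde{n}_3,\tilde{n}_4)$ is a nuclear family in $\tilde{\Lambda}$, and a relation $n_1-n_2+n_3-n_4=0$ in $\Lambda$ is equivalent, via $B^{-1}$, to the same relation in $\tilde{\Lambda}$. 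This immediately transfers properties $(P1')$, $(P2)$, $(P3)$, $(P4)$, $(P5')$, $(P6')$ from the corresponding properties of $\tilde{\Lambda}$; non-degeneracy is preserved because $B$ is invertible.

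For the quantitative bounds, I would use that $\omega\geq 1$ and Remark \ref{rem:monza} give the two-sided comparison
\[
q\,|\tilde{n}| \le |n| \le p\,|\tilde{n}| \le 2\omega q\,|\tilde{n}|,\qquad n=B\tilde{n},
\]
so that $q^{2s}\tilde{S}_i \le S_i \le (2\omega q)^{2s}\tilde{S}_i$. Dividing the two extremes of this sandwich yields
\[
\frac{S_{N-2}}{S_3}\geq (2\omega)^{-2s}\frac{\tilde{S}_{N-2}}{\tilde{S}_3}\gtrsim 2^{(s-1)(N-4)}\omega^{-2s},\qquad \frac{S_j}{S_i}\le (2\omega)^{2s}\frac{\tilde{S}_j}{\tilde{S}_i}\lesssim e^{sN}\omega^{-2s},
\]
which are exactly \eqref{bound:gen} and the ratio in the statement. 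The size bound \eqref{def:R} follows analogously: $R\le |\tilde{n}|\le 3^N R$ combined with the scaling inequality gives $qR\le |n|\le 2\omega q\cdot 3^N R$, absorbing constants into $C$. The preservation of cardinality $2^{N-1}$ per generation is automatic since $B$ is injective.

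The proof is essentially a translation, and I do not expect a genuine obstacle; the only thing to be careful about is that the orthogonality defining a nuclear rectangle in $\Z^2$ becomes the $B^{-2}$-orthogonality in $p\Z\times q\Z$ (rather than the Euclidean one), which is why we must use the $(p,q)$-family notion of Definition \ref{def:pqfamily} and the primed versions $(P1')$, $(P5')$, $(P6')$ of the combinatorial properties; this is precisely the content of the algebraic identity displayed above, and once it is recorded the rest is bookkeeping of the powers of $q$ and $\omega$.
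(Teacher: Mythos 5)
Your approach is the same as the paper's: the paper sets up the scaling $B=\mathrm{diag}(p,q)$, notes $q^2|\tilde n|^2\le|n|^2\le 4\omega^2q^2|\tilde n|^2$, and then simply cites Theorem~4.1 of \cite{GG21}, which is precisely the statement that $\Lambda:=B\tilde\Lambda$ inherits the combinatorial properties in their $(p,q)$-family form together with the quantitative bounds. Your algebraic identity $\langle B^{-2}(n_1-n_2),n_2-n_3\rangle=\langle\tilde n_1-\tilde n_2,\tilde n_2-\tilde n_3\rangle$ and the transfer of $(P1')$, $(P5')$, $(P6')$ are correct and are exactly the reformulation the paper makes before the theorem statement; the lower-bound computation for $S_{N-2}/S_3$ is also correct.

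There is, however, a step that does not close as written. From $q^{2s}\tilde S_i\le S_i\le(2\omega q)^{2s}\tilde S_i$ the two-sided comparison gives, for $i<j$,
\[
\frac{S_j}{S_i}\le (2\omega)^{2s}\,\frac{\tilde S_j}{\tilde S_i}\lesssim e^{sN}\,\omega^{+2s},
\]
not $e^{sN}\omega^{-2s}$: the chain you wrote asserts $(2\omega)^{2s}e^{sN}\lesssim e^{sN}\omega^{-2s}$, which would require $\omega^{4s}\lesssim 1$, false for $\omega\ge 1$. As stated, \eqref{bound:gen} is \emph{stronger} than what the sandwich inequality can deliver, so either the exponent of $\omega$ in \eqref{bound:gen} should be $+2s$ (or absent), or a different mechanism in \cite{GG21} must be invoked that you have not reproduced. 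You should either correct the sign in your conclusion to $\omega^{+2s}$ (and then check that this weaker bound still suffices where \eqref{bound:gen} is used, in particular in the estimate $\delta^{2\sigma}\sum_{i\neq 3}S_i/S_3\lesssim\delta^{2\sigma}(N-1)e^{sN}$ in the proof of Lemma~\ref{lem:ratio}, where one would then need $N\gtrsim\log\omega$, which is compatible with the later choice of $N$), or explain where an extra factor $\omega^{-4s}$ could come from; the anisotropic dilation alone does not produce it.
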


%We define the inner radius of the $\Lambda$ set as
%\[
%\mathtt{R}_{\Lambda}=\min_{n\in \Lambda} |n|\gtrsim q R.
%\]

%%%%%%%%%%%%%%%%%%%%%%%%%%%%%%%%%%%%%%%%%%%%%%%%%%%%%%%%%%%%%%%%%

\subsection{Normal form}\label{sec:NF}
In this section we construct a suitable set of coordinates to study the local (in the $\ell^1$-topology) dynamics of the Hamiltonian $\mathcal{H}$ in \eqref{calH}. We construct the new set of variables by using a weak version of the Birkhoff normal form procedure for PDEs around elliptic equilibriums. Essentially this consists in eliminating just a term in the Hamiltonian $\mathcal{H}$ which generates a finite dimensional vector field. For this reason we avoid the problem of small divisors, which is not present in the square (and rational) case, but it arises in the case of irrational tori.

\smallskip

We consider the finite subset $\Lambda\subset \Z^2$ given by Theorem \ref{thm:gen} and we denote by $\mathcal{H}^{(4, d)}$, $0\le d\le 4$, the Hamiltonian terms Fourier supported on 
\begin{equation}\label{def:A}
	\begin{aligned}
		\mathcal{A}(d):=\Big\{ & (n_1, n_2, n_3, n_4)\in (\Z^2)^4 : n_1-n_2+n_3-n_4=0,\\
		\,\, &\text{exactly}\,\, d\,\, \text{integer vectors $n_i$ belong to}\,\, \Z^2\setminus \Lambda \Big\}.
	\end{aligned}
\end{equation}
Thus $\mathcal{H}^{(4)}=\sum_{d=0}^4 \mathcal{H}^{(4, d)}$.
In particular $\mathcal{H}^{(4, 0)}$ is a sum of monomials $z_{n_1}\overline{ z_{n_2}} z_{n_3} \overline{z_{n_{4}}}$ where $n_1, n_2, n_3, n_{4}$ are $\Lambda$.
Since $n_1, n_2, n_3, n_4$ must satisfy the relation $n_1-n_2+n_3-n_4=0$, which is a consequence of the $x$-translation invariance of the system, $n_1, n_2, n_3, n_4$ have to form a parallelogram. By property $(P6')$ we have that $(n_1, n_2, n_3, n_4)$ must form a $(p, q)$-family or satisfy $n_1=n_2=n_3=n_4$. Therefore
\begin{equation}\label{Htoy}
\mathcal{H}^{(4, 0)}=-\frac{1}{2} \sum_{n\in \Z^2} |z_n|^4+\frac{1}{2}\sum_{\substack{(n_1, n_2, n_3, n_4)\,\mathrm{is}\\\mathrm{a\,}(p,q)\,\,\mathrm{family}}} z_{n_1} \overline{z_{n_2}} z_{n_3} \overline{z_{n_4}}.
\end{equation}
Given $n\in \Lambda_i$, $i=1, \dots, N$ we consider the equations of motion of $\mathcal{H}^{(4, 0)}$
\begin{equation}\label{eq:Htoy}
-\mathrm{i} \dot{z}_n=\partial_{\overline{z_n}} \mathcal{H}^{(4, 0)} =-|z_n|^2 z_n+2 z_{n_{\mathrm{child}\,1}} z_{n_{\mathrm{child}\,2}} \overline{z_{n\,\mathrm{spouse}}}+2 z_{n_{\mathrm{parent}\,1}} z_{n_{\mathrm{parent}\,2}} \overline{z_{n\,\mathrm{sibiling}}},
\end{equation}
where the factor $2$ comes from trivial permutations of nuclear families. The system of equations \eqref{eq:Htoy} is the one defining the toy model introduced in \cite{CKSTT}.
 Thus $\mathcal{H}^{(4, 0)}$ is the Hamiltonian of the toy model. In the normal form procedure we leave untouched this term, but we remove $\mathcal{H}^{(4, 1)}$. Indeed, if this term is not present in the Hamiltonian, the subspace
\[
\mathcal{U}_{\Lambda}=\{ z\colon \Z^2 \to \mathbb{C} : z_n=0 ,\,\,n\notin \Lambda \}
\]
is invariant by the flow of $\mathcal{H}^{(4)}$ {and the restricted Hamiltonian to $\mathcal{U}_{\Lambda}$ coincides with the Hamiltonian of the toy model, i.e. $\mathcal{H}^{(4)}_{|_{\mathcal{U}_{\Lambda}}}=\mathcal{H}^{(4, 0)}$.}

\smallskip

Given $n_1, \dots, n_4\in \Z^2$ we denote by
\begin{equation}\label{def:Omega}
	\begin{aligned}
		\Omega_{\omega}(n_1, \dots, n_4)&:=|n_1|_{\omega}^2-|n_2|_{\omega}^2+|n_3|_{\omega}^2-|n_4|_{\omega}^2,\\
		\Omega_{V}(n_1, \dots, n_4)&:=V_{n_1}-V_{n_2}+V_{n_3}-V_{n_4},\\
		\Omega_{\omega, V}(n_1, \dots, n_4)&:=\Omega_{\omega}(n_1, \dots, n_4)+\Omega_{V}(n_1, \dots, n_4).
	\end{aligned}
\end{equation}
If $\Omega_{\omega, V}(n_1, \dots, n_4)=0$ then we say that $(n_1, \dots, n_4)$ is a $4$-wave resonance for the NLS equation \eqref{NLS}.
For $k=0, \dots, 4$ let us denote 
\begin{equation}\label{def:UL}
	\mathcal{L}_k:=\inf_{\mathcal{A}(k)} |\Omega_{\omega, V}(n_1, \dots, n_4)|, \qquad \mathcal{U}_k:=\sup _{\mathcal{A}(k)} |\Omega_{\omega, V}(n_1, \dots, n_4)|.
\end{equation}
We are interested in
\begin{itemize}
\item Lower bounds for $\mathcal{L}_1$, in order to estimate the remainder given by the Birkhoff normal form procedure (see Lemma \ref{lem:L1});
\item Upper bounds for $\mathcal{U}_0$, in order to measure how far are the $(p, q)$-families to be resonant quartets (see Lemma \ref{lem:U0}). 
\end{itemize}

\medskip

We shall perform the normal form procedure in the space $\ell^1$. For $\eta>0$ we denote by 
\[
B(\eta):=\left\{  z\colon \Z^2\to \mathbb{C} : \| z\|_{\ell^1}<\eta \right\}
\]
the open ball of radius $\eta$ centered at the origin.

%
%We shall use the following classical lemma.
%\begin{lemma}\label{lem:young}
%	Let 
%	\[
%	F=\sum_{\substack{\sum_{i=1}^{d+1} \sigma_i n_i=0,\\ \sigma_i\in\{\pm\}}} F^{\sigma_1 \dots \sigma_{d+1}}_{n_1 \dots n_{d+1}}\,z^{\sigma_1}_{n_1}\dots z^{\sigma_{d+1}}_{n_{d+1}}, \qquad z_n^{+}:=z_n, \quad z_n^{-}:=\overline{z_n}
%	\]
%	be a homogenous Hamiltonian of degree $d+1$ preserving momentum. Then
%	\[
%	\| X_F(z) \|_{\ell^1}\lesssim \bral F \brar \| z\|_{\ell^1}^d,
%	\]
%	where
%	\[
%	\bral F \brar:=\sup_{(\sigma_i, n_i)} |F^{\sigma_1 \dots \sigma_{d+1}}_{n_1 \dots n_{d+1}}|.
%	\]
%	Moreover, let $G$ be a homogenous, momentum preserving, Hamiltonian of degree $d'+1$ of the same form of $F$. Then $\{ F, G \}$ is a homogenous, momentum preserving, Hamiltonian of degree $d+d'$ and
%	\[
%	\bral \{F, G\} \brar\lesssim \bral F \brar\,\bral G \brar.
%	\]
%	{
%		We deduce that
%		\begin{equation}\label{ineq:lie}
%			\| [X_F, X_G](z) \|_{\ell_1}\lesssim \bral F \brar\,\bral G \brar \| z\|^{d+d'-1}_{\ell^1},
%		\end{equation}
%		where $[\cdot, \cdot ]$ denotes the standard Lie bracket between vector fields.
%	}
%	
%\end{lemma}

\begin{lemma}\label{lem:L1}
	Recall the constant $R$ of Theorem \ref{thm:gen}  and the $\psi$-convergent $(p, q)$ of $\omega$ in \eqref{B}. There exists a universal constant $c>0$ such that if
	 \begin{equation}\label{assumption}
		3^{2N}\,R^2\, \frac{\psi(q)}{q}\le c
	\end{equation}
	then
	\begin{equation}\label{lower_bound}
		\mathcal{L}_1\geq \mathtt{L},
	\end{equation}
where $\mathtt{L}$ is introduced in \eqref{cond:M}.
\end{lemma}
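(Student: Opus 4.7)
The plan is to reduce the lower bound on $|\Omega_{\omega,V}|$ to an integer arithmetic inequality on a non-rectangular quartet in $\tilde\Lambda$, modulated by the Diophantine deviation $\omega^2 q^2 - p^2$. Fix a quartet $(n_1, n_2, n_3, n_4) \in \mathcal{A}(1)$. Since $|\Omega_{\omega, V}|$ is invariant under the natural permutations that preserve the momentum relation $n_1 - n_2 + n_3 - n_4 = 0$, I may assume without loss of generality that $n_1, n_2, n_3 \in \Lambda$ and $n_4 \in \Z^2 \setminus \Lambda$. The inclusion $\Lambda \subset p\Z \times q\Z$ forces $n_4 = n_1 - n_2 + n_3 \in p\Z \times q\Z$, so I can write $n_i = B \tilde n_i$ with $\tilde n_1, \tilde n_2, \tilde n_3 \in \tilde\Lambda$ and $\tilde n_4 \in \Z^2 \setminus \tilde\Lambda$. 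By the closure property $(P1)$, $(\tilde n_1, \tilde n_2, \tilde n_3, \tilde n_4)$ cannot be a non-degenerate rectangle in $\Z^2$; moreover, the trivial coincidences $\tilde n_1 = \tilde n_2$ or $\tilde n_2 = \tilde n_3$ would force $\tilde n_4 \in \tilde\Lambda$, contradicting the standing assumption. Hence the standard Euclidean inner product satisfies the integer bound
\[
|\langle \tilde n_1 - \tilde n_2, \tilde n_2 - \tilde n_3 \rangle | \geq 1.
\]

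Next, I would compute $\Omega_\omega$ explicitly. Writing $n_i = (j_i, k_i)$ and expanding $|n_1 - n_2 + n_3|_\omega^2$, a routine computation yields
\[
\Omega_\omega(n_1, n_2, n_3, n_4) = 2\bigl[ (j_1 - j_2)(j_2 - j_3) + \omega^2 (k_1 - k_2)(k_2 - k_3) \bigr].
\]
Substituting $n_i = B \tilde n_i$ and the Diophantine expansion $\omega^2 q^2 = p^2 + \delta$ with $|\delta| = |\omega q - p|(\omega q + p) \leq 3 \omega q \, \psi(q)$ (using Remark \ref{rem:monza} and Definition \ref{def:psiapprox}), one obtains the decomposition
\[
\Omega_\omega = 2 p^2 \, \langle \tilde n_1 - \tilde n_2, \tilde n_2 - \tilde n_3 \rangle + 2 \delta \, (\tilde k_1 - \tilde k_2)(\tilde k_2 - \tilde k_3).
\]
The first summand (the ``main'' contribution) has modulus at least $2p^2 \geq \omega^2 q^2 / 2$ by the lower bound in Remark \ref{rem:monza}. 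For the second (the ``Diophantine remainder''), the combinatorial construction that produces $\tilde\Lambda$ delivers a bound $|\tilde n_i | \lesssim 3^N R$ for $i = 1, 2, 3$, so the error is controlled by a universal multiple of $\omega q \, \psi(q) \, 3^{2N} R^2$.

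Under assumption \eqref{assumption}, with the universal constant $c$ chosen small enough, the error is at most $\omega^2 q^2 / 4$, and hence $|\Omega_\omega| \geq \omega^2 q^2 / 4 \geq \omega^2 q^2 / 8$. Combining with the trivial estimate $|\Omega_V| \leq 4 \sup_n |V_n|$, I would conclude
\[
|\Omega_{\omega, V}| \geq |\Omega_\omega| - |\Omega_V| \geq \frac{\omega^2 q^2}{8} - 4 \sup_n |V_n| = \mathtt{L},
\]
uniformly over $\mathcal{A}(1)$, which is the claimed bound \eqref{lower_bound}. The only delicate step, in my view, is the combinatorial one: certifying via $(P1)$ and the short degenerate-case analysis that $\langle \tilde n_1 - \tilde n_2, \tilde n_2 - \tilde n_3 \rangle$ cannot vanish on $\mathcal{A}(1)$. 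Once that integer lower bound is in hand, everything reduces to balancing a main term of order $\omega^2 q^2$ against a Diophantine remainder of order $\omega q \, \psi(q) \, 3^{2N} R^2$, which is exactly what \eqref{assumption} encodes.
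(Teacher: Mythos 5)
Your proof is correct and takes essentially the same approach as the paper: where the paper simply invokes Lemma 4.5 of \cite{GG21} under hypothesis \eqref{assumption} to obtain $\inf_{\mathcal{A}(1)}|\Omega_{\omega}|\geq p^2/2\geq \omega^2 q^2/8$, you unfold that lemma's argument explicitly — reducing via permutation symmetry to $n_4\notin\Lambda$, pulling back through $B$, and splitting $\Omega_\omega$ into the term $2p^2\langle\tilde n_1-\tilde n_2,\tilde n_2-\tilde n_3\rangle$ (whose bracket is a nonzero integer by the closure property $(P1)$) plus a Diophantine remainder of size $\lesssim 3^{2N}R^2\omega q\,\psi(q)$ controlled by \eqref{assumption}. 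The final step, bounding $|\Omega_V|\le 4\sup_n|V_n|$ by the triangle inequality and subtracting, is identical to the paper's.
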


\begin{proof}
	By Lemma $4.5$ in \cite{GG21}, which requires the assumption \eqref{assumption}, we have that
	\[
	\inf_{\mathcal{A}(1)} |\Omega_{\omega}(n_1, \dots, n_4)|\geq  \frac{p^2}{2}\geq \frac{\omega^2 q^2}{8} .
	\]
	
	By triangle inequality we have
	\[
	|\Omega_V(n_1, \dots, n_4)|\le 4 \sup_n |V_n|.
	\]
	Thus, recalling \eqref{cond:M},
	\[
	|\Omega_{\omega, V}(n_1, n_2, n_3, n_4)|\geq |\Omega_{\omega}(n_1, \dots, n_4)|-|\Omega_V(n_1, \dots, n_4)| \geq \frac{\omega^2}{8} q^2-4 \sup_n |V_n|=\mathtt{L}.
	\]

\end{proof}

{
\begin{remark}{(Case $\omega=1$)}
In the square case one can prove that $\mathcal{L}_1\gtrsim q^2\,R^2$. Indeed $\Omega_1(n_1,\dots, n_4)\neq 0$, because of the closure property $(P1)$, and $n_i$ are integer vectors with $|n_i|\gtrsim q R$.
\end{remark}
}

%\begin{remark}
%In \cite{GG21} the lower bound \eqref{lower_bound} is better, because of the absence of the potential $V$. Anyway, we do not need this better estimate to conclude the argument, then this allows to weaken the conditions \eqref{cond:M}. \red{As a consequence we do not worse the upper bound on the time of instability $T$. (?)}
%\end{remark}

We observe that the quantity $\mathtt{L}$ in \eqref{cond:M} is increasing with respect to the parameter $q$. Hence we can obtain large lower bounds on $\mathcal{L}_1$ choosing large $q$'s. This fact shall be used to make smaller the remainder of the following Birkhoff normal form procedure.

\begin{proposition}[Weak Birkhoff Normal Form]\label{prop:wbnf}
	There exist $\eta_0>0$ and a symplectic change of coordinates $z=\Gamma(w)$ such that for all $\eta\in (0, \eta_0)$ the map $\Gamma\colon B(\eta)\to B(2\eta)$ transforms the Hamiltonian $\mathcal{H}$ in \eqref{calH} into the Hamiltonian
	\begin{equation}\label{def:Htilde}
	\mathcal{H}\circ \Gamma=\mathcal{H}^{(2)}+\mathcal{H}^{(4, 0)}+\mathcal{H}^{(4, \geq 2)}+\mathcal{R},
	\end{equation}
	where 
	\begin{equation}\label{bound:remainder}
	\sup_{w\in B(\eta)} \| X_{\mathcal{R}}(w)\|_{\ell^1} \lesssim \mathtt{L}^{-1} \eta^5.
	\end{equation}
	Moreover
 the maps $\Gamma^{\pm 1}$ are (locally) close to the identity in the following sense
	\begin{equation}\label{bound:GammaId} 
	\sup_{w\in B(\eta)}\| \Gamma^{\pm 1}(w)- w \|_{\ell^1} \lesssim \,\mathtt{L}^{-1}\, \eta^3.
	\end{equation}
	
	%if $\eta$ is such that
	%\begin{equation}\label{cond:remainder}
	%\eta^2 \lesssim \mathcal{L}_1
	%\end{equation}

\end{proposition}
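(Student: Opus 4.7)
The plan is to realize $\Gamma$ as the time-one flow $\Phi_F^1$ of an auxiliary quartic Hamiltonian $F$ solving the cohomological equation $\{\mathcal{H}^{(2)}, F\} = -\mathcal{H}^{(4,1)}$. Since $\mathcal{H}^{(2)}$ is diagonal with real frequencies $|n|_\omega^2 + V_n$ and $\mathcal{H}^{(4,1)}$ is Fourier-supported on $\mathcal{A}(1)$, this equation is solved explicitly by
\begin{equation*}
F = \frac{1}{2\mathrm{i}} \sum_{(n_1,n_2,n_3,n_4)\in \mathcal{A}(1)} \frac{w_{n_1}\overline{w_{n_2}} w_{n_3} \overline{w_{n_4}}}{\Omega_{\omega,V}(n_1,n_2,n_3,n_4)},
\end{equation*}
whose coefficients are bounded by $\mathtt{L}^{-1}$ in absolute value thanks to Lemma \ref{lem:L1}. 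A straightforward symmetrization check ensures that $F$ is real-valued and generates a symplectic, reality-preserving flow.

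The next step is to establish the tame estimate that any translation-invariant quartic Hamiltonian $H = \sum h_{n_1n_2n_3n_4} w_{n_1}\overline{w_{n_2}}w_{n_3}\overline{w_{n_4}}$ with uniformly bounded coefficients satisfies $\|X_H(w)\|_{\ell^1} \lesssim \sup|h|\cdot \|w\|_{\ell^1}^3$; this is a direct consequence of the Banach algebra property of $\ell^1$ under convolution. Applied to $F$ it yields $\|X_F(w)\|_{\ell^1} \lesssim \mathtt{L}^{-1}\|w\|_{\ell^1}^3$, and a standard Picard iteration then produces the flow $\Phi_F^t$ on $B(\eta)$ for $\eta$ small enough, with image inside $B(2\eta)$ and
\begin{equation*}
\|\Phi_F^t(w) - w\|_{\ell^1} \leq \int_0^t \|X_F(\Phi_F^s(w))\|_{\ell^1}\, ds \lesssim \mathtt{L}^{-1} \eta^3,
\end{equation*}
establishing \eqref{bound:GammaId}; invertibility of $\Gamma$ with the analogous bound follows from running the flow backwards.

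To identify the transformed Hamiltonian I would use the Lie series with integral remainder
\begin{equation*}
\mathcal{H} \circ \Phi_F^1 = \mathcal{H} + \{\mathcal{H}, F\} + \int_0^1 (1-t)\, \{\{\mathcal{H}, F\}, F\} \circ \Phi_F^t\, dt.
\end{equation*}
Combining the cohomological identity with the splitting $\mathcal{H}^{(4)} = \mathcal{H}^{(4,0)} + \mathcal{H}^{(4,1)} + \mathcal{H}^{(4,\geq 2)}$ reorganizes the right-hand side into the structure \eqref{def:Htilde} with
\begin{equation*}
\mathcal{R} = \{\mathcal{H}^{(4)}, F\} + \int_0^1 (1-t)\, \{-\mathcal{H}^{(4,1)} + \{\mathcal{H}^{(4)}, F\}, F\} \circ \Phi_F^t\, dt,
\end{equation*}
consisting of sextic and higher-degree polynomials each carrying at least one factor $\mathtt{L}^{-1}$. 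Since Poisson brackets of quartic polynomials are sextic with coefficients inheriting the uniform bound, the same tame estimate gives $\|X_{\{\mathcal{H}^{(4)}, F\}}(w)\|_{\ell^1} \lesssim \mathtt{L}^{-1}\|w\|_{\ell^1}^5$; the iterated double bracket contributes only an $\mathtt{L}^{-2}\eta^7$ correction absorbed into the leading term for small $\eta$, proving \eqref{bound:remainder}.

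The main technical obstacle will be the careful bookkeeping of the multilinear $\ell^1$-algebra estimate together with the combinatorics of Poisson brackets, verifying that the differentiations in $w_n, \overline{w_n}$ preserve the translation-invariance constraint and that the symmetrization of $F$ does not spoil these bounds. By contrast, the small divisor difficulty usually associated to Birkhoff normal forms in the irrational setting is neatly circumvented here: we normalize only the piece $\mathcal{H}^{(4,1)}$, whose divisors enjoy the \emph{uniform} lower bound $\mathtt{L}$ from Lemma \ref{lem:L1}, so no accumulation of divisors has to be analyzed.
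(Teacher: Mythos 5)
Your proposal is correct and follows essentially the same route as the paper, whose own ``proof'' is simply a citation of Proposition~1 in \cite{GG21} together with Lemma~\ref{lem:L1}: one solves the cohomological equation only for the piece $\mathcal{H}^{(4,1)}$, observes that the relevant divisors are uniformly bounded below by $\mathtt{L}$, builds $\Gamma$ as the time-one flow of the resulting generating Hamiltonian, and reads off \eqref{def:Htilde}--\eqref{bound:GammaId} from the Lie series with integral remainder. So you are in effect reproducing, with the extra $\mathtt{L}^{-1}$ bookkeeping, the argument the paper delegates to \cite{GG21}.

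One technical remark on your remainder bound. The sentence ``Poisson brackets of quartic polynomials are sextic with coefficients inheriting the uniform bound'' is a bit too quick: the coefficient of a given sextic monomial in $\{\mathcal{H}^{(4)},F\}$ is a sum over the contraction index, and this sum is not a priori bounded by $\sup|h|\cdot\sup|f|$. The clean way to get the estimate is not to control $\bral \{\mathcal{H}^{(4)},F\}\brar$ and then invoke the tame estimate, but to estimate the vector field directly via $X_{\{\mathcal{H}^{(4)},F\}}=[X_{\mathcal{H}^{(4)}},X_F]$ and the Lipschitz/differential bounds of Lemma~\ref{lem:young}: from $\|DX_{\mathcal{H}^{(4)}}(r)[v]\|_{\ell^1}\lesssim\|r\|_{\ell^1}^2\|v\|_{\ell^1}$ and $\|X_F(r)\|_{\ell^1}\lesssim\mathtt{L}^{-1}\|r\|_{\ell^1}^3$ one gets $\|X_{\{\mathcal{H}^{(4)},F\}}(r)\|_{\ell^1}\lesssim\mathtt{L}^{-1}\|r\|_{\ell^1}^5$ with a universal constant, and similarly for the iterated bracket. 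This yields exactly \eqref{bound:remainder}, so the conclusion stands; only the intermediate reasoning needed the adjustment.
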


\begin{proof}
	The proof follows by Proposition $1$ in \cite{GG21} and by Lemma \ref{lem:L1}. 
	
%	
%	The only thing which is not proven in \cite{GG21} is the bound \eqref{radius} on the radius $\eta$ of the domain where the Birkhoff map $\Gamma$ is well defined. This map is the time-one flow map of a $4$-degree homogenous Hamiltonian
%	\[
%	F=\sum_{\mathcal{A}(1)} F_{n_1 \dots n_4}^{\sigma_1 \dots \sigma_4} z_{n_1}^{\sigma_1} \dots z_{n_4}^{\sigma_4}
%	\] which solves the following homological equation
%	\begin{equation}\label{homo}
%	\{ F, \mathcal{H}^{(2)} \}+\mathcal{H}^{(4)}=\mathcal{H}^{(4, 0)}+\mathcal{H}^{(4, \geq 2)}.
%	\end{equation}
%	Observe that $\mathcal{A}(1)$ is a finite set, hence $F$ defines a finite dimensional ODE system.
%	 To ensure the existence of the flow $\Phi^t_F$ for times $t\in [0, 1]$ and that $\Phi^t_F\colon B(\eta)\to B(2\eta)$ one can use a standard fixed point argument, which requires that
%	\[
%	 \eta\,\left(\sup_{n_i, \sigma_i} |F_{n_1 \dots n_4}^{\sigma_1 \dots \sigma_4}|\right)^{1/2}
%	 \] is small enough.  By solving the homological equation \eqref{homo} in Fourier and by using Lemma \ref{lem:L1} we have that
%	 \[
%	 |F_{n_1 \dots n_4}^{\sigma_1 \dots \sigma_4}|\lesssim \mathtt{L}^{-1}.
%	 \]
%	Hence $\Gamma$ is well defined on $B(\eta)$ with $\eta\lesssim \mathtt{L}^{1/2}$. In order to get the estimate for the remainder \eqref{bound:remainder} we need to ask that $\eta^5 \mathtt{L}^{-1}$ is small enough.
	\end{proof}

\subsection{Quasi-resonant model}\label{sec:resmodel}

In this section we derive the toy model of \cite{CKSTT} as a quasi-resonant system of the Hamiltonian in Birkhoff coordinates \eqref{def:Htilde}. Then we apply a theorem proved in \cite{GuardiaK12} which ensures the existence of an orbit of the toy model which displays the desired energy exchange behavior and give an upper bound on the time at which the energy is transferred to high modes. 
\begin{remark}
By Theorem \ref{thm:gen} low and high modes are respectively contained in generations $\Lambda_3$ and $\Lambda_{N-2}$. The generations $\Lambda_1, \Lambda_2, \Lambda_{N-1}$ and $\Lambda_N$ are not considered just for technical reasons. We refer to \cite{CKSTT} for more details.
\end{remark}

We introduce the rotating coordinates
\begin{equation}\label{def:rotcoord}
	w_{n}=r_{n}\,e^{\mathrm{i} (|n|_{\omega}^2+V_n)\,t}, \qquad n\in \Z^2.
\end{equation}
Even if this change of variables is not canonically symplectic, the new vector field $\mathcal{X}$ still possesses a Hamiltonian structure, with a time-dependent Hamiltonian, and it is given by

\begin{equation*}
	\mathcal{X}:= X_{\mathtt{H}_0}+ X_{\mathtt{H}_{\geq 2}}+X_{\mathtt{R}},
\end{equation*}
where (recall notation \eqref{def:A})
\begin{align*}
	&\mathtt{H}_0:=\mathcal{H}^{(4, 0)}(\{ r_{n}\,e^{\mathrm{i} \lambda(n)t} \}_{n\in\Lambda}),\\
	&\mathtt{H}_{\geq 2}:=\mathcal{H}^{(4, \geq 2)}(\{ r_{n}\,e^{\mathrm{i} \lambda(n)t} \}_{n\in\Z^2}),\\
	&\mathtt{R}(t)=\mathcal{R} (\{ r_{n}\,e^{\mathrm{i} \lambda(n)t} \}_{n\in\Z^2}).
\end{align*}
Defining
\begin{equation}\label{def:N}
	\mathcal{N}:=\mathcal{H}^{(4, 0)}+\mathcal{H}^{(4, \geq 2)}\qquad \text{ and }\qquad {\mathcal{Q}^{(4, 0)}}:=\mathtt{H}_0-\mathcal{H}^{(4, 0)}, \qquad {\mathcal{Q}^{(4, \geq 2)}}:=\mathtt{H}_{\geq 2}-\mathcal{H}^{(4, \geq 2)}
\end{equation}
we can write $\mathcal{X}$ as 
\begin{equation}\label{vecfrot}
	\mathcal{X}=X_{\mathcal{N}}+X_{\mathcal{Q}^{(4, 0)}}+X_{\mathcal{Q}^{(4, \geq 2)}}+X_{\mathtt{R}}.
\end{equation}
The Hamiltonian $\mathcal{N}$ gives the first order of the Hamiltonian in rotating coordinates. Its flow leaves invariant the subspace
\[
\mathcal{V}_{\Lambda}:=\{ r\colon \Z^2\to\C : r_{n}=0, \,\,\, n\notin \Lambda  \},
\]
since the vector field of $\mathcal{H}^{(4, \geq 2)}$ vanishes on it.
 The dynamics of $\mathcal{N}$ on $\mathcal{V}_{\Lambda}$ is given by the restricted Hamiltonian
 \[
 \mathcal{N}_{|_{\mathcal{V}_{\Lambda}}}=\mathcal{H}^{(4, 0)}.
 \]
 As we commented at the beginning of Section \ref{sec:NF}, the Hamiltonian $\mathcal{H}^{(4, 0)}$ is the Hamiltonian of the toy model introduced in \cite{CKSTT}.
 By the fact that the frequencies from each generation interact with that generation and with its adjacent generations in exactly the same way we have the following.
 
%\begin{remark}\label{rem}
%	Observe that the vector fields $X_{\mathcal{H}^{(4, \geq 2)}}$ and $X_{\mathtt{H}^{(4, \geq 2)}}$ vanish on $\mathcal{V}_{\Lambda}$. 
%\end{remark}

\begin{lemma}[Intragenerational equality \cite{CKSTT}]
	Consider the subspace
	\[
	\widetilde{\mathcal{V}_{\Lambda}}:=\left\{ r\in \mathcal{V}_{\Lambda} : r_{n}=r_{n'}\,\,\,\forall n, n'\in \Lambda_j\,\,\text{for some}\,\,j\in \{ 1, \dots, N \} \right\}
	\]
	where all the members of a generation take the same value. Then $\widetilde{\mathcal{V}_{\Lambda}}$ is invariant under the flow of \eqref{Htoy}.
\end{lemma}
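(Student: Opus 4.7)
The plan is to argue directly from the equations of motion \eqref{eq:Htoy}, exploiting the existence and uniqueness properties $(P2), (P3)$ of spouses/siblings/children/parents inside $\Lambda$. I would begin by fixing an arbitrary $n\in\Lambda_j$ for $1\le j\le N$ and inspecting term-by-term the vector field
\[
-\mathrm{i}\dot r_n=-|r_n|^2 r_n+2\,r_{n_{\mathrm{child}\,1}}r_{n_{\mathrm{child}\,2}}\overline{r_{n\,\mathrm{spouse}}}+2\,r_{n_{\mathrm{parent}\,1}}r_{n_{\mathrm{parent}\,2}}\overline{r_{n\,\mathrm{sibling}}}
\]
generated by $\mathcal{H}^{(4,0)}$. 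By $(P2)$ the spouse of $n$ lies in $\Lambda_j$ and both children lie in $\Lambda_{j+1}$; by $(P3)$ the sibling of $n$ lies in $\Lambda_j$ and both parents lie in $\Lambda_{j-1}$. For $j=1$ the parental block is absent, while for $j=N$ the child block is absent; we set $b_0=b_{N+1}=0$ to incorporate these boundary cases uniformly.

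Next I would substitute the ansatz $r_n(t)=b_j(t)$ for every $n\in\Lambda_j$, which is the defining relation of $\widetilde{\mathcal{V}_\Lambda}$. The self-interaction term reduces to $-|b_j|^2 b_j$, the two children contribute $b_{j+1}^2$, the two parents contribute $b_{j-1}^2$, and both the spouse and the sibling contribute $\overline{b_j}$. Hence, regardless of which representative $n\in\Lambda_j$ is chosen, the right-hand side becomes
\[
-\mathrm{i}\dot b_j=-|b_j|^2 b_j+2\,b_{j+1}^2\,\overline{b_j}+2\,b_{j-1}^2\,\overline{b_j},\qquad j=1,\dots,N,
\]
which is a closed autonomous ODE on $\mathbb{C}^N$. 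The crucial point is that the right-hand side depends only on the generation index $j$ and not on the particular mode $n\in\Lambda_j$: this is precisely where the combinatorial properties of $\Lambda$ from Theorem \ref{thm:gen} are used, since the uniqueness clauses in $(P2)$ and $(P3)$ guarantee that different members of $\Lambda_j$ see structurally identical nuclear families.

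Finally, I would conclude by a standard uniqueness argument for ODEs. Given an initial datum in $\widetilde{\mathcal{V}_\Lambda}$, one can solve the reduced system on $\mathbb{C}^N$ for $(b_1,\dots,b_N)$ and define $r_n(t):=b_j(t)$ for all $n\in\Lambda_j$; by the component-wise computation above, this produces a solution to the full equations \eqref{eq:Htoy} starting from the given datum. Since the vector field of $\mathcal{H}^{(4,0)}$ is smooth and the Cauchy problem has a unique solution, this must coincide with the genuine $\mathcal{H}^{(4,0)}$-trajectory issued from that datum, which therefore stays in $\widetilde{\mathcal{V}_\Lambda}$ for all time.

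I do not foresee any genuine obstacle: the content is purely combinatorial-algebraic and relies only on the already-established structure of $\Lambda$. The only mild point of care is the degenerate case in which the self-interaction term $-|r_n|^2 r_n$ and the spouse/sibling terms might a priori interact non-uniformly among members of the same generation; this is ruled out immediately by property $(P4)$ (non-degeneracy), which ensures that spouses and siblings are honest members of the generation distinct from $n$, so the substitution $r_n=b_j$ is internally consistent.
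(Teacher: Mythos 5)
Your argument---inspecting the equations of motion \eqref{eq:Htoy}, using $(P2)$ and $(P3)$ to verify that under the ansatz $r_n=b_j$ for $n\in\Lambda_j$ the right-hand side depends only on the generation index $j$, and concluding via uniqueness of solutions of the reduced ODE on $\mathbb{C}^N$---is correct and coincides with the standard proof from \cite{CKSTT}, which the paper cites without reproducing. One minor inaccuracy in your last paragraph: $(P4)$ asserts that the sibling of a mode is never equal to its spouse, not that spouses and siblings are distinct from $n$ itself (the latter follows instead from the non-degeneracy of the nuclear family as a parallelogram); in any case this is immaterial, since the substitution $r_n=b_j$ is internally consistent within each generation regardless of whether such coincidences occur.
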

We set
\begin{equation*}
	b_i=r_n \qquad \text{for any}\qquad n\in \Lambda_i, \quad i=1, \dots, N.
\end{equation*}
The equation \eqref{eq:Htoy} restricted on $\widetilde{\mathcal{V}_{\Lambda}}$ reduces to the following nearest-neighbor system
\begin{equation}\label{eq:toy}
	\dot{b}_i=-\mathrm{i} b_i^2 \overline{b_i}+2\mathrm{i} \overline{b_i} (b^2_{i-1}+b^2_{i+1}), \qquad i=1, \dots, N.
\end{equation}
The following result concerns the existence of an orbit of \eqref{eq:toy} which displays a forward energy cascade behavior.
\begin{theorem}{(\cite{GuardiaK12})}\label{thm:orbit}
	Fix $\gamma>0$ large enough. Then for any large enough $N$ and $\delta=\exp(-\gamma N)$, there exists a trajectory $b(t)$ of the system \eqref{eq:toy}, $\sigma>0$ independent of $\gamma, N$ and $T_0>0$ such that
	\begin{align*}
		|b_3(0)|>1-\delta^{\sigma},& \qquad |b_j(0)|<\delta^{\sigma} \quad\,\,\, \mathrm{for}\,\,j\neq 3,\\
		|b_{N-1}(T_0)|>1-\delta^{\sigma},& \qquad |b_j(T_0)|<\delta^{\sigma} \quad  \mathrm{for}\,\,j\neq N-1.
	\end{align*}
	Moreover there exists a constant $\mathbb{K}>0$ independent of $N$ such that $T_0$ satisfies
	\begin{equation}\label{bound:timeT0}
		% \mathbb{K}^{-1} \gamma N^2\leq 
		0<T_0\leq  \mathbb{K}\,N\,\log(\delta^{-1})=\mathbb{K} \gamma N^2.
	\end{equation}
\end{theorem}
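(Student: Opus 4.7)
My plan is to prove Theorem \ref{thm:orbit} by constructing the desired trajectory as a shadowing orbit for a heteroclinic chain of invariant periodic orbits of the toy model. First, I would identify the relevant invariant objects. Direct inspection of \eqref{eq:toy} shows that if $b_i(0)=0$ for $i\neq j$ and $|b_j(0)|=1$ for a single $j$, then the solution is $b_j(t)=e^{-\mathrm{i} t}$, $b_i(t)\equiv 0$ for $i\neq j$; denote this periodic orbit by $T_j$. Since the system is rotationally invariant ($b\mapsto e^{\mathrm{i}\theta}b$ and $b\mapsto e^{\mathrm{i}\pi/2}b$ act as symmetries preserving \eqref{eq:toy}), the quantities $|b_j|$ obey closed nonlinear equations from which linearizing around $T_j$ yields normal hyperbolicity: in the $b_{j\pm 1}$ directions there are non-zero real parts of eigenvalues of the Jacobian, giving one-dimensional stable and unstable complex directions linking $T_j$ to its neighbors $T_{j\pm 1}$.

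Second, I would establish a heteroclinic connection from $T_j$ to $T_{j+1}$ for each $j=3,\dots,N-2$. Consider the two-generation invariant subspace $M_j=\{b_i=0\colon i\neq j,j+1\}$, which is four real-dimensional; within it, one has a two-parameter family of solutions of \eqref{eq:toy} with $|b_j|^2+|b_{j+1}|^2$ conserved up to phases, and one can exhibit explicitly an orbit with $|b_j(-\infty)|=1$, $|b_{j+1}(+\infty)|=1$ (the classical CKSTT "sliding" solution). This orbit is the heteroclinic we need. Verifying that the stable and unstable manifolds of $T_j$ and $T_{j+1}$, computed in the full $N$-dimensional phase space, intersect transversally along $M_j$ reduces to checking that linearized variations transverse to $M_j$ do not lie in the tangent space of $W^u(T_j)\cap W^s(T_{j+1})$; this is a direct linear-algebra check using that the couplings $\overline{b_i}(b_{i-1}^2+b_{i+1}^2)$ propagate only to nearest neighbors.

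Third, I would concatenate these heteroclinics using a $\lambda$-lemma / shadowing argument. Given the transverse heteroclinic chain $T_3 \to T_4 \to \cdots \to T_{N-1}$, standard hyperbolic dynamics produces an orbit that visits arbitrarily small neighborhoods of the $T_j$ in order. Quantitatively, starting $\delta^{\sigma}$-close to $T_3$ on the local unstable manifold, one reaches a $\delta^{\sigma}$-neighborhood of $T_4$ after time comparable to $\log(\delta^{-\sigma})$ (the escape time from a hyperbolic saddle with $O(1)$ eigenvalues) plus an $O(1)$ heteroclinic transit; iterating $N-4$ times gives
\begin{equation*}
T_0 \;\lesssim\; (N-4)\bigl(\log(\delta^{-\sigma})+O(1)\bigr) \;\lesssim\; N\log(\delta^{-1})\;=\;\mathbb{K}\gamma N^2,
\end{equation*}
which matches \eqref{bound:timeT0}. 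Throughout, one chooses $\sigma$ small enough (independent of $\gamma,N$) so that the shadowing tubes remain nested and the final state really has $|b_{N-1}(T_0)|>1-\delta^\sigma$.

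The main obstacle I expect is making the shadowing quantitative and uniform in $N$: one must verify that the expansion/contraction rates at each $T_j$ and the transversality angles of the heteroclinic intersections do not degenerate as $j$ varies, so that the per-step time cost is a universal $O(\log\delta^{-1})$ and not growing with $N$. Exploiting the intragenerational symmetry (which reduces each saddle-to-saddle problem to the same two-dimensional model) is what makes this step tractable, and is the essential reason why the bound is $O(N^2)$ rather than something worse.
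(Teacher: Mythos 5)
Your outline has the right skeleton — identify the circles $T_j$, the two-generation invariant subspaces $M_j$ and the ``sliding'' heteroclinic inside each, then thread an orbit through the chain at a cost of $O(\log\delta^{-1})$ per saddle — and this is indeed the blueprint in Guardia--Kaloshin. However, two of the load-bearing steps in your argument do not work as stated, and they are precisely where \cite{GuardiaK12} has to do its real work.

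First, the transversality you invoke in step two is not available, and neither CKSTT nor Guardia--Kaloshin use a $\lambda$-lemma or a classical shadowing theorem. The heteroclinic from $T_j$ to $T_{j+1}$ lives entirely inside the invariant subspace $M_j$; the directions transverse to $M_j$ contain both stable directions of $T_{j+1}$ (namely the $b_j$ modes) and unstable directions of $T_j$ (the $b_{j+1}$ modes), plus a large neutral block (all modes with $|i-j|\ge 2$ are frozen at linear order, so $T_j$ is not normally hyperbolic). Counting dimensions one sees that $W^u(T_j)$ and $W^s(T_{j+1})$ do not meet transversally in the ambient space, and the ``direct linear-algebra check'' you describe would come up empty. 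The actual argument propagates explicit estimates through a tube around the heteroclinic chain: one shows that an orbit entering a small ball around $T_j$ within the right cone exits still close to the outgoing separatrix, and one controls the error accumulated across all the $M_j$.

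Second, and more importantly, your phrase ``the escape time from a hyperbolic saddle with $O(1)$ eigenvalues is $\log(\delta^{-1})$'' hides the main technical obstruction. After symmetry reduction the linearization of the saddle at $T_j$ in the $(b_{j-1},b_{j+1})$ block has eigenvalues $\pm\sqrt{3}$ with multiplicity, i.e.\ a $1\!:\!1$ resonance between the two hyperbolic directions (and further resonances with the neutral block). Because of these resonances, Sternberg/Hartman--Grobman linearization fails, so the ``standard hyperbolic dynamics'' you appeal to does not directly give the $\log\delta^{-1}$ transit time. Guardia--Kaloshin's key contribution over CKSTT is exactly here: they construct a polynomial (resonant) normal form near each saddle, show the non-straightenable resonant monomials are harmless for the transit estimate, and only then deduce the per-saddle time bound. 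This is what turns the CKSTT (much worse) time estimate into the $O(N\log\delta^{-1})$ claimed in \eqref{bound:timeT0}. Without addressing the resonance and replacing the $\lambda$-lemma with explicit tube estimates, the proposal does not yield the quantitative bound.

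A final, smaller point: you should keep track of the constraint $\sum_i|b_i|^2 = 1$ (mass) and the toy-model energy; the reduction is done on the sphere, and the uniformity in $N$ of the saddle eigenvalues and of the constants in the local transit maps is checked on the reduced slow system, exploiting that each saddle neighborhood involves at most three adjacent generations. Your closing remark about intragenerational symmetry reducing the problem to a repeated two-dimensional model is the right intuition, but it needs to be made quantitative at the level of the normal form remainder, not merely at the level of the linearization.
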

% Note that \cite{GuardiaK12} only provides an upper bound for the time. However, from the proof in  \cite{GuardiaK12}, one can easily obtain lower bounds of the same order. 

Since $\mathcal{N}$ in \eqref{def:N} is a homogenous Hamiltonian of degree $4$ we can consider the scaled solution
\begin{equation*}\label{lambda}
	b^{\lambda}(t)=\lambda^{-1} b(\lambda^{-2} t), \qquad \lambda>1,
\end{equation*}
where $b(t)$ is the trajectory given by Theorem \ref{thm:orbit}.
%
%
%Let us call \blue{(Is this correct? Maybe it could be written better)}
%\[
%\mathtt{r}(t):=\sum_{i=1}^N\sum_{n=(j, k)\in\Lambda_i} \mathtt{r}_n(t)\,e^{\mathrm{i} (j x+\omega k y)}
%\]
%where all the $\mathtt{r}_n(t)$ with $n\in \Lambda_i$ coincides with the orbit $b_i(t)$ given by Theorem \ref{thm:orbit}, which has life-span $[0, T_0]$ (recall the bound \eqref{bound:timeT0}).  The function $\mathtt{r}(t)$ is solution of the Hamiltonian system $\mathcal{N}$.\\
Then
\begin{equation}\label{def:rlambda}
	r^{\lambda}(t, x)=\sum_{n\in \Z^2} r^{\lambda}_n(t)\,e^{\mathrm{i} n \cdot x}, \qquad r^{\lambda}_n(t):=\begin{cases}
		b_i^{\lambda}(t) \qquad n\in\Lambda_i, \quad i=1, \dots, N,\\
		0 \qquad \,\, \,\, \quad n\notin \Lambda
	\end{cases}
\end{equation}
is a solution of $\mathcal{N}$ in \eqref{def:N}.
The life-span of $r^{\lambda}(t)$ is $[0, T]$ where
\begin{equation}\label{def:timeT}
	T:=\lambda^{2} T_0=\lambda^2 \gamma \mathbb{K} N^2.
\end{equation}
By Theorem \ref{thm:orbit} and arguing as in Lemma $9.2$ in \cite{GuardiaHP16}, we have
\begin{equation}\label{bound:base}
	\sup_{t\in [0, T]} \| r^{\lambda}(t) \|_{\ell^1}\lesssim N 2^{N-1} \lambda^{-1}.
\end{equation}

\subsection{Approximation argument}\label{sec:approxarg}
In this section we prove that there exists a solution of \eqref{vecfrot} that stays close to $r^{\lambda}(t)$ in the $\ell^1$-topology over the time interval $[0, T]$.
%To this end, we give an upper bound on the resonant combinations among modes in $\Lambda$. This will be useful in estimating the error generated by considering a quasi-resonant model instead of a resonant one.
We set
\begin{equation}\label{delta}
	\vartheta:=3^{2N} R^2\,\omega^3\,q \psi(q)+ 4 (q R)^{-s_0}.
\end{equation}
Recalling the property \eqref{cond:psi} on the function $\psi$ we see that $\vartheta$ can be made arbitrarily small by taking $q$ large enough.

\smallskip

We need some preliminary lemmata. The first lemma gives an upper bound on $\mathcal{U}_0$, introduced in \eqref{def:UL}, which measures how close is a $(p, q)$-family to be a $4$-wave resonance. The second one gives estimates on the vector field of homogeneous Hamiltonians as $\mathcal{N}, \mathcal{Q}^{(4, 0)}, \mathcal{Q}^{(4, \geq 2)}$ appearing in \eqref{vecfrot}. Since the latter is a classical result we omit the proof, which is based on Young's inequality.

\begin{lemma}\label{lem:U0}
	%Recall the definition \eqref{def:UL}.
	%Assume that $\omega$ is $\psi$-approximable.
	 The constant $\mathcal{U}_0$ in \eqref{def:UL} satisfies
	\[
	\mathcal{U}_0\lesssim  \vartheta.
	\]
\end{lemma}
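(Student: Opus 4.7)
My plan is to reduce the supremum to $(p,q)$-families, which by property $(P6')$ are the only nontrivial quartets in $\mathcal{A}(0)$, and then estimate $\Omega_\omega$ and $\Omega_V$ separately, controlling the former by the Diophantine quality of the convergent $(p,q)$ and the latter by the Sobolev decay of $V$.

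First, by $(P6')$ every $(n_1, n_2, n_3, n_4) \in \mathcal{A}(0)$ either satisfies a trivial relation (in which case $\Omega_\omega = \Omega_V = 0$ by pairwise cancellation) or is a $(p,q)$-family. Hence
\[
\mathcal{U}_0 = \sup_{(n_1,\ldots, n_4)\text{ is a }(p,q)\text{-family}} |\Omega_{\omega, V}(n_1, n_2, n_3, n_4)|.
\]

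Second, for a $(p,q)$-family write $n_i = B\tilde{n}_i$ with $\tilde{n}_i = (\tilde{j}_i, \tilde{k}_i)$ in a nuclear family of $\tilde{\Lambda}$, so that the $\tilde{n}_i$ are the vertices of a rectangle in $\Z^2$ and $\sum_{i=1}^4 (-1)^{i+1} |\tilde{n}_i|^2 = 0$. Expanding $|n_i|_\omega^2 = p^2 \tilde{j}_i^2 + \omega^2 q^2 \tilde{k}_i^2$ and using the rectangle identity to trade $\sum_i (-1)^{i+1} \tilde{k}_i^2 = -\sum_i (-1)^{i+1} \tilde{j}_i^2 =: -D$, one gets the clean factorization
\[
\Omega_\omega(n_1, \ldots, n_4) = (p^2 - \omega^2 q^2)\, D = (p - \omega q)(p + \omega q)\, D.
\]
I would then bound the three factors: $|p - \omega q| \le \psi(q)$ because $(p,q)$ is a $\psi$-convergent (Definition \ref{def:psiapprox}); $|p + \omega q| \lesssim \omega q$ by Remark \ref{rem:monza}; and $|\tilde{j}_i| \le |n_i|/p \lesssim 3^N R$, whence $|D| \lesssim 3^{2N} R^2$, using the upper bound in \eqref{def:R} together with $p \geq \omega q/2$. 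Multiplying yields $|\Omega_\omega| \lesssim 3^{2N} R^2\, \omega q\, \psi(q)$, which is dominated by the first term of $\vartheta$.

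Third, the contribution of the potential is routine: since $V \in H^{s_0}$ the decay estimate \eqref{decaypot} gives $|V_n| \lesssim |n|^{-s_0}$, and combining with the lower bound $|n_i| \geq C^{-1} q R$ from \eqref{def:R} one obtains $|V_{n_i}| \lesssim (qR)^{-s_0}$ for every $n_i \in \Lambda$. Hence $|\Omega_V| \le 4 \max_i |V_{n_i}| \lesssim (qR)^{-s_0}$. Summing the two bounds and comparing with \eqref{delta} yields $|\Omega_{\omega, V}| \lesssim \vartheta$, which is the claim. The substantive step is the second one, where the factorization $p^2 - \omega^2 q^2 = (p - \omega q)(p + \omega q)$ converts the anisotropic lattice dilation into a genuine Diophantine smallness measured by $\psi(q)$; the other steps are bookkeeping using Sobolev decay and the size estimates of Theorem \ref{thm:gen}.
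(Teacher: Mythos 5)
Your proof is correct and arrives at the same conclusion as the paper, but where the paper simply cites Lemma~6.1 of~\cite{GG21} to bound $|\Omega_\omega|$ on $(p,q)$-families, you give a self-contained derivation. The structural skeleton is identical: invoke $(P6')$ to reduce to $(p,q)$-families, bound $\Omega_\omega$ and $\Omega_V$ separately, then add. The genuine added content is the identity $\Omega_\omega(n_1,\dots,n_4)=(p^2-\omega^2 q^2)\,D$ with $D=\sum_i(-1)^{i+1}\tilde j_i^2$, which you obtain by pulling back through $B$ to the exact rectangle resonance of the nuclear family; this makes transparent why the Diophantine defect $|p-\omega q|\le\psi(q)$ is precisely what governs the quasi-resonance. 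Your resulting bound $|\Omega_\omega|\lesssim 3^{2N}R^2\,\omega\,q\,\psi(q)$ is in fact slightly sharper than the paper's $3^{2N}R^2\,\omega^3 q\,\psi(q)$ (the paper loses an $\omega^2$ factor by estimating $|n|^2\le C^2\omega^2 q^2 3^{2N}R^2$ directly, without exploiting the factorization), and since $\omega\ge 1$ both are dominated by $\vartheta$. The $\Omega_V$ estimate and the final triangle inequality match the paper exactly.

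Two small points worth noting. First, the trivial relations in $\mathcal{A}(0)$: as you observe, for these $\Omega_\omega=\Omega_V=0$ by pairwise cancellation, so they contribute nothing — this handles not just the $n_1=n_2=n_3=n_4$ case the paper singles out but all trivial relations, which is cleaner. Second, in bounding $|D|$ you use $|\tilde j_i|\le|n_i|/p\lesssim 3^N R$ via $p\ge\omega q/2$ and the upper bound in \eqref{def:R}; this is correct, though it is worth flagging that the upper bound in \eqref{def:R} already carries the $3^N$ factor, so you are not introducing a new worst-case constant.
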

\begin{proof}
By property $(P6')$ if $(n_1, n_2, n_3, n_4)\in \mathcal{A}(0)$ (recall the definition of $\mathcal{A}(0)$ in \eqref{def:A}) then $(n_1, n_2, n_3, n_4)$ forms a $(p, q)$-family or $n_1=n_2=n_3=n_4$.
	Then by Lemma $6.1$ in \cite{GG21} we have that
	\[
	\left|\Omega_{\omega}(n_1, n_2, n_3, n_4)\right|\lesssim \max_{n\in\Lambda}\{ |n|^2 \}\,\omega\,\frac{\psi(q)}{q}\stackrel{\eqref{def:R}}{\lesssim} 3^{2N} R^2\,\omega^3\, q\, {\psi(q)} \qquad \forall (n_1, n_2, n_3, n_4)\in \mathcal{A}(0).
	\]
	By \eqref{def:R} $|n_i|\gtrsim q R $, hence by the decay of the Fourier coefficients of $V$ \eqref{decaypot} we have
	\[
	|\Omega_V(n_1, n_2, n_3, n_4)|\lesssim 4 (q R)^{-s_0} \qquad \forall (n_1, n_2, n_3, n_4)\in \mathcal{A}(0).
	\]
	By triangle inequality we get the thesis.
\end{proof}

{
\begin{remark}{(Case $\omega=1$)}
In the square case one can just consider $\vartheta$ in \eqref{delta} as $\vartheta=4 (q R)^{-s_0}$.
\end{remark}
}

\begin{lemma}\label{lem:young}
Let 
\[
F=\sum_{\substack{\sum_{i=1}^{d+1} \sigma_i n_i=0,\\ \sigma_i\in\{\pm\}}} F^{\sigma_1 \dots \sigma_{d+1}}_{n_1 \dots n_{d+1}}\,r^{\sigma_1}_{n_1}\dots r^{\sigma_{d+1}}_{n_{d+1}}, \qquad r_n^{+}:=r_n, \quad r_n^{-}:=\overline{r_n}
\]
 be a homogenous Hamiltonian of degree $d+1$ preserving momentum. Then
\[
\| X_F(r) \|_{\ell^1}\lesssim \bral F \brar \| r\|_{\ell^1}^d \qquad \forall r\in \ell^1,
\]
where
\[
\bral F \brar:=\sup_{(\sigma_i, n_i)} |F^{\sigma_1 \dots \sigma_{d+1}}_{n_1 \dots n_{d+1}}|.
\]
Moreover, for all $r, r'\in \ell^1$,
 \begin{align*}
 \| X_F(r)-X_F(r') \|_{\ell^1} &\lesssim \bral F \brar (\| r \|_{\ell^1}+\| r' \|_{\ell^1})^{d-1} \,\| r-r' \|_{\ell^1},\\
 \| X_F(r)-X_F(r')-D X_F(r) [r-r'] \|_{\ell^1} &\lesssim  \bral F \brar \| r \|_{\ell^1}^{d-2} \| r-r' \|^2_{\ell^1},
 \end{align*}
 where $D X_F(r)$ denotes the differential of the vector field at $r$.
\end{lemma}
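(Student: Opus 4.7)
The plan is to derive all three estimates from a single ingredient, Young's convolution inequality on $\ell^1$,
\[
\|f_1 * \cdots * f_d\|_{\ell^1} \leq \|f_1\|_{\ell^1} \cdots \|f_d\|_{\ell^1},
\]
combined with the observation that the momentum constraint $\sum_i \sigma_i n_i=0$ turns each component of $X_F(r)$ into a $d$-fold convolution. Concretely, writing the Hamiltonian vector field componentwise, selecting in the sum defining $F$ an index $i$ with $\sigma_i=-1$ and setting $n_i=n$ (which corresponds to taking $\partial_{\overline{r_n}}F$) leaves the constraint $\sum_{j\neq i}\sigma_j n_j = n$. Replacing every coefficient by $\bral F\brar$ and noting $|r_m^+|=|r_m^-|=|r_m|$ produces the pointwise majorization
\[
|(X_F(r))_n|\lesssim \bral F\brar\,(|r|*\cdots*|r|)_n
\]
with $d$ copies of $|r|$ on the right. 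Summing over $n$ and applying Young yields the first estimate; the combinatorial factor coming from the $d+1$ possible choices of the distinguished index and from the symmetrization of coefficients is absorbed in the implicit constant.

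For the Lipschitz bound I would feed into the same pointwise majorization the multilinear telescoping identity
\[
r_{m_1}\cdots r_{m_d}-r'_{m_1}\cdots r'_{m_d}=\sum_{k=1}^{d} r'_{m_1}\cdots r'_{m_{k-1}}(r_{m_k}-r'_{m_k})\,r_{m_{k+1}}\cdots r_{m_d},
\]
so that each of the $d$ resulting terms is a convolution of $d-1$ factors drawn from $\{|r|,|r'|\}$ with one factor of $|r-r'|$; Young then produces $\bral F\brar(\|r\|_{\ell^1}+\|r'\|_{\ell^1})^{d-1}\|r-r'\|_{\ell^1}$. For the quadratic estimate, view $X_F$ as the evaluation of a symmetric $d$-multilinear map $M$ on the diagonal: $X_F(r)=M(r,\dots,r)$, $DX_F(r)[h]=d\,M(r,\dots,r,h)$. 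Applying the telescoping identity once produces $d$ terms of the form $M(r',\dots,r',r-r',r,\dots,r)$; subtracting $DX_F(r)[r-r']$ written as $d$ equal copies of $M(r-r',r,\dots,r)$ and telescoping a second time in the remaining slots generates an additional factor of $r-r'$. The outcome is a finite sum of $d$-multilinear forms with two slots occupied by $r-r'$ and $d-2$ by elements of $\{r,r'\}$; Young's inequality controls the $\ell^1$ norm by $\bral F\brar(\|r\|_{\ell^1}+\|r-r'\|_{\ell^1})^{d-2}\|r-r'\|_{\ell^1}^2$, which reduces to the stated form in the regime of application (where $r$ and $r'$ are treated as small perturbations of each other in $\ell^1$).

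The only real obstacle is bookkeeping: one must check that the complex-conjugation pattern $\sigma_i\in\{\pm\}$ does not spoil the convolution majorization (it does not, since $|r^{\pm}_m|=|r_m|$), and that the finite combinatorial prefactors arising from the number of ways a given frequency can appear among the $d+1$ indices and from passing to symmetric multilinear representatives are all absorbed into the implicit constants. Beyond this, no ingredient past Young's inequality and elementary multilinear algebra is required, which is why the paper relegates the statement to a lemma and omits the proof.
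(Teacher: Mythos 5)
The paper explicitly omits the proof of this lemma, noting only that it ``is based on Young's inequality,'' so your argument is a correct reconstruction of exactly the approach the author has in mind: differentiate $F$ with respect to $\overline{r_n}$, use the momentum constraint to turn the component of $X_F$ into a $d$-fold convolution (after flipping signs in the indices according to the $\sigma_j$, which leaves $\ell^1$ norms unchanged), apply Young's inequality for the first estimate, and telescope once (resp. twice, in the symmetric $d$-multilinear representative) for the Lipschitz and second-order bounds. One small point you correctly flag: the literal form of the third bound, $\bral F\brar \|r\|_{\ell^1}^{d-2}\|r-r'\|_{\ell^1}^2$, does not hold for arbitrary $r,r'\in\ell^1$ (e.g.\ $r=0$, $r'\neq 0$); what the double telescoping actually yields is $\bral F\brar(\|r\|_{\ell^1}+\|r-r'\|_{\ell^1})^{d-2}\|r-r'\|_{\ell^1}^2$, which collapses to the stated form when $\|r-r'\|_{\ell^1}\lesssim\|r\|_{\ell^1}$. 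This is precisely the regime in the paper's only use of the estimate (the bound for $Z_2$ in the proof of Proposition~\ref{prop:approxarg}, where $r-r'=\xi$ is small by the bootstrap assumption), so the lemma's phrasing is a harmless informality rather than a gap, and your proof is sound.
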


\begin{proposition}{(Approximation argument)}\label{prop:approxarg}
	%Recall $R$ in \eqref{def:R}, $\gamma$ in Theorem \ref{thm:orbit} and \eqref{set:qlambda}.\\ 
	Fix $\epsilon>0$ small enough. Then, for $q$ satisfying the assumption \eqref{assumption} of Lemma \ref{lem:L1},
	\begin{equation}\label{cond:final}
		\lambda={2}^{s N},\qquad \vartheta\le \exp(-5^N), \qquad \mathtt{L}\geq \exp(5^N)
	\end{equation}
	and for $N>0$ large enough
	the following holds. If $r(t)$ is a solution of \eqref{vecfrot} such that
	\begin{equation}\label{assump:initial}
		\| r(0)-r^{\lambda}(0) \|_{\ell^1}\le \lambda^{-2}\mathtt{L}^{-1/2}
	\end{equation}
	then
	\begin{equation}\label{bound:close}
		\sup_{t\in [0, T]}\| r(t)-r^{\lambda}(t) \|_{\ell^1}\le \lambda^{-(1+\epsilon)},
	\end{equation}
	where $T$ is the time in \eqref{def:timeT}.
\end{proposition}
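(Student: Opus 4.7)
The plan is to run a standard bootstrap argument on the deviation $\xi(t):=r(t)-r^\lambda(t)$ in the $\ell^1$-topology. Since $r^\lambda(t)\in\mathcal{V}_\Lambda$ and $X_{\mathcal{H}^{(4,\geq 2)}}$ vanishes on $\mathcal{V}_\Lambda$, one has $\dot r^\lambda=X_\mathcal{N}(r^\lambda)$ exactly; subtracting from \eqref{vecfrot} gives
\[
\dot\xi = \bigl[X_\mathcal{N}(r)-X_\mathcal{N}(r^\lambda)\bigr]+X_{\mathcal{Q}^{(4,0)}}(r)+X_{\mathcal{Q}^{(4,\geq 2)}}(r)+X_\mathtt{R}(r).
\]
I would define $t_* := \sup\{t\in[0,T]:\|\xi(s)\|_{\ell^1}\le \lambda^{-(1+\epsilon)} \;\forall\,s\le t\}$ and assume, for contradiction, that $t_* < T$. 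Throughout $[0,t_*]$, the bound \eqref{bound:base} together with the choice $\lambda=2^{sN}$ ($s>1$) gives the uniform bound $\|r(t)\|_{\ell^1}\lesssim N\,2^N\lambda^{-1}$.

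Next I would bound each source via Lemma \ref{lem:young}, using the structure from Section \ref{sec:NF}. The Lipschitz estimate yields $\|X_\mathcal{N}(r)-X_\mathcal{N}(r^\lambda)\|_{\ell^1}\lesssim N^2 4^N \lambda^{-2}\,\|\xi\|_{\ell^1}$. The coefficients of $\mathcal{Q}^{(4,0)}(t)$ are $\tfrac12(e^{\mathrm{i}\Omega t}-1)$ for $\Omega=\Omega_{\omega,V}$ on $(p,q)$-families; by Lemma \ref{lem:U0} one has $|\Omega|\lesssim \vartheta$, so $|e^{\mathrm{i}\Omega t}-1|\le |\Omega| t\le \vartheta t$, and Lemma \ref{lem:young} gives $\|X_{\mathcal{Q}^{(4,0)}}(r)\|_{\ell^1}\lesssim \vartheta\,t\,\|r\|_{\ell^1}^3$. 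Because both $\mathcal{H}^{(4,\geq 2)}$ and $\mathtt{H}_{\geq 2}$ vanish on $\mathcal{V}_\Lambda$, the vector field $X_{\mathcal{Q}^{(4,\geq 2)}}(r^\lambda)=0$; hence $\|X_{\mathcal{Q}^{(4,\geq 2)}}(r)\|_{\ell^1}\lesssim N^2 4^N \lambda^{-2}\,\|\xi\|_{\ell^1}$, which is absorbed into the Lipschitz term. Finally \eqref{bound:remainder} gives $\|X_\mathtt{R}(r)\|_{\ell^1}\lesssim \mathtt{L}^{-1}\|r\|_{\ell^1}^5$. A Gronwall integration over $[0,T]$, with $T\sim \lambda^2 N^2$, then yields
\[
\|\xi(t)\|_{\ell^1}\;\lesssim\; e^{C\,N^4\,4^N}\Bigl(\|\xi(0)\|_{\ell^1}+\vartheta\,T^2\,(N 2^N\lambda^{-1})^3+T\,\mathtt{L}^{-1}\,(N 2^N\lambda^{-1})^5\Bigr).
\]

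The hard part is controlling the Gronwall blow-up $\exp(C\,N^4\,4^N)$, which comes from the Lipschitz constant of the cubic field $X_\mathcal{N}$ on a ball of radius $\|r^\lambda\|_{\ell^1}\sim N\,2^N\lambda^{-1}$. The conditions \eqref{cond:final} are calibrated precisely to beat this: since $(5/4)^N/N^4\to\infty$, one has $N^4 4^N\ll 5^N$ for $N$ large, so the super-exponential decays $\vartheta\le e^{-5^N}$ and $\mathtt{L}^{-1/2}\le e^{-5^N/2}$ dominate the Gronwall factor, and each term on the right is strictly less than $\tfrac12\lambda^{-(1+\epsilon)}$, contradicting the definition of $t_*$. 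The essential gain over \cite{GG21} — which is what makes the polynomial-in-$\mathcal{K}\omega^s$ time bound of Theorem \ref{thm:weak} possible in place of a super-exponential one — is the smoothing factor $\mathtt{L}^{-1}$ in the remainder estimate \eqref{bound:remainder}: by choosing $q$ large in \eqref{cond:M} one makes $\mathtt{L}$ as large as needed, which legitimises the very stringent initial error \eqref{assump:initial} and thus widens the admissible time window to the full interval $[0,T]$.
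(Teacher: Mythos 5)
Your proof is correct and follows essentially the same route as the paper: a bootstrap on $\xi=r-r^\lambda$ in $\ell^1$, Young-type estimates on the vector fields, Gronwall, and then exploiting that the super-exponential smallness of $\vartheta$ and largeness of $\mathtt{L}$ in \eqref{cond:final} overwhelm the Gronwall factor $e^{C N^4 4^N}$. The only cosmetic differences are in the bookkeeping of the right-hand side: the paper splits $X_\mathcal{N}(r)-X_\mathcal{N}(r^\lambda)$ into a linearization $Z_1=DX_\mathcal{N}(r^\lambda)[\xi]$ plus a quadratic remainder $Z_2$, and splits $X_{\mathcal{Q}^{(4,0)}}(r)$ into $X_{\mathcal{Q}^{(4,0)}}(r^\lambda)$ (with coefficient $|e^{\mathrm{i}\Omega t}-1|\le\vartheta t$) plus a Lipschitz difference, whereas you absorb these into a single Lipschitz term and a single $\vartheta\,t\,\|r\|_{\ell^1}^3$ bound respectively; both yield the same leading-order Gronwall estimate, so the distinction is immaterial.
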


\begin{proof}
By the above assumptions we can apply Lemma \ref{lem:L1}  and consider the estimates \eqref{bound:remainder}, \eqref{bound:GammaId}.

	We define $\xi(t):=r(t)-r^{\lambda}(t)$. The equation for $\xi$ can be written as $\dot{\xi}=Z_0+Z_1+Z_2+Z_3+Z_4$,
	where
	\begin{align*}
		&Z_0:=X_{\mathtt{R}}(r^{\lambda}+\xi),\\
		&Z_1:=D X_{\mathcal{N}}(r^{\lambda})[\xi],\\
		&Z_2:=X_{\mathcal{N}}(r^{\lambda}+\xi)-X_{\mathcal{N}}(r^{\lambda})-D 
		X_{\mathcal{N}}(r^{\lambda})[\xi],\\
		&Z_3:=X_{\mathcal{Q}^{(4, 0)}}(r^{\lambda})+X_{\mathcal{Q}^{(4, \geq 2)}}(r^{\lambda}),\\
		&Z_4:=X_{\mathcal{Q}^{(4, 0)}}(r^{\lambda}+\xi)-X_{\mathcal{Q}^{(4, 0)}}(r^{\lambda})+X_{\mathcal{Q}^{(4, \geq 2)}}(r^{\lambda}+\xi)-X_{\mathcal{Q}^{(4, \geq 2)}}(r^{\lambda}).
	\end{align*}
	By the differential form of Minkowski's inequality we obtain
	\[
	\frac{d}{d t} \| \xi \|_{\ell^1}\le \|Z_0\|_{\ell^1}+\|Z_1\|_{\ell^1}+\|Z_2\|_{\ell^1}+\|Z_3\|_{\ell^1}+\|Z_4\|_{\ell^1}.
	\]
	We assume temporarily a \emph{bootstrap assumption}. We call $T^*$ to the 
	supremum of the times $T'>0$ such that
	\[
	\| \xi(t)\|_{\ell^1}\le 2   \lambda^{-(1+\epsilon)} \qquad \forall t\in [0, 
	T'].
	\]
	Note that \eqref{assump:initial} implies $T_*>0$. A posteriori we will prove 
	that  $T_*\geq T$, where $T$ is the time introduced in \eqref{def:timeT}, and therefore drop the bootstrap assumption.
	
	First we need a priori estimates on the terms $Z_i$, $i=1, 2, 3, 4$, defined above.  {We shall 
	use repeatedly Lemma \ref{lem:young}} and the bootstrap assumption without 
	mentioning them. We remark that the change to rotating coordinates 
	\eqref{def:rotcoord} does not affect the bounds on the $\ell^1$ norm.
	
	\medskip
	
	\noindent\textbf{Bound for $Z_0$.} By \eqref{bound:base} and the bootstrap assumption
	\[
	\left\|r^\lambda+\xi\right\|_{\ell^1}\lesssim N 2^{N-1} 
	\lambda^{-1}+\lambda^{-(1+\epsilon)}\lesssim N 2^N \lambda^{-1}.
	\]
	Hence by \eqref{bound:remainder}
	\[
	\| Z_0 \|_{\ell^1}\lesssim \mathtt{L}^{-1}  N^5 2^{5N} 
	\lambda^{-5}.
	\]
	\noindent\textbf{Bound for $Z_1$.} By \eqref{bound:base} and considering that $\mathcal{N}$ is a homogenous Hamiltonian of degree $4$ we have 
	\[
	\| Z_1 \|_{\ell^1}\lesssim N^2 2^{2(N-1)} \lambda^{-2} \| \xi \|_{\ell^1}\lesssim 
	N^2 4^{N} \lambda^{-2}\, \| \xi \|_{\ell^1}.
	\]
	
	\noindent\textbf{Bound for $Z_2$.} By 
	% \eqref{bound:remainder2} and 
	\eqref{bound:base} and using the bootstrap assumption we have
	\[
	\| Z_2 \|_{\ell^1}\lesssim 
	% q^{-2}\, N 2^{N-1}^4 \lambda^{-4} \| \xi \|_{\ell^1}+q^{-4}\,N 2^{N-1}^6 
	% \lambda^{-6} \| \xi \|_{\ell^1} +
	\lambda^{-1} N 2^{N-1} \| \xi\|_{\ell^1}^2\lesssim 
	 N 2\,^{N}\, \lambda^{-2-\epsilon}\, \| \xi \|_{\ell^1}.
	\]
	% By using the bootstrap assumption and the condition \eqref{cond:final} 
	% \[
	%  \| Z_2 \|_{\ell^1}\lesssim \Big( \, N 2^{N-1}^3 \lambda^{-2\nu-2}+  N 2^{N-1}^5 
	% \lambda^{-4\nu-4} + \lambda^{-\epsilon} \Big)\,\lambda^{-2}\,N 2^{N-1}\, \| 
	% \xi\|_{\ell^1}\lesssim N 2\,^{N-1}\, \lambda^{-2}\, \| \xi \|_{\ell^1}
	% \]
	%\nu<1
	% provided that $\alpha$ is large enough.\\
	\noindent\textbf{Bound for $Z_3$.} We have $X_{\mathcal{Q}^{(4, 
			\geq 2)}}(r^{\lambda})=0$ because the vector field of $\mathcal{Q}^{(4, 
			\geq 2)}$ vanishes on $\mathcal{V}_{\Lambda}$. Then, by Lemma \ref{lem:U0}, we have
	\[
	\| Z_3(t) \|_{\ell^1}\lesssim \mathcal{U}_0 N^3 2^{3N}\,  \lambda^{-3}\,t\lesssim 
	\vartheta N^3 2^{3N}\,  \lambda^{-3}\,t.
	\]
	where we have used the bound
%	% Lemma \ref{lem:U0}
%	% and the bound
	\[
	|e^{\mathrm{i} a t}-1|\le |a t| \qquad \forall a, t \in \R.
	\]
	\noindent\textbf{Bound for $Z_4$.} By \eqref{bound:base} we have
	\[
	\| Z_4 \|_{\ell^1}\lesssim   
	N^2 4^{N} \lambda^{-2}\, \| \xi \|_{\ell^1},
	\]
	where we have used that $|e^{\mathrm{i}\alpha}-1|\le 2$ for all $\alpha\in \R$.
	
	\medskip
	
	By collecting the previous estimates we have
	\[
	\frac{d}{dt} \| {\xi}(t) \|_{\ell^1}\le C_0( N^2 4^{N} \lambda^{-2} \| \xi(t) 
	\|_{\ell^1} + N^3 2^{3N}\, \lambda^{-3}\,\vartheta\,t+\mathtt{L}^{-1} N^5 
	2^{5N} \lambda^{-5}),
	\]
	for some universal constant $C_0>0$. Then, by Gronwall Lemma and condition  
	\eqref{cond:final},
	\begin{align*}
		\| \xi(t) \|_{\ell^1}
		&\le  \left(\lambda^{-2} \mathtt{L}^{-1/2}+ 
		N^3 2^{3 N-1} \vartheta \lambda^{-3}  t^2
		+\mathtt{L}^{-1} N^5 2^{5N} \lambda^{-5}\,  t  \right)\,\exp 
		\left(C_0\, N^2 4^{N} \lambda^{-2} t \right).
	\end{align*}
	
	Then, for $t\in [0, T]$, where $T$ is the time defined in \eqref{def:timeT}, one has  
	\begin{align*}
		\| \xi(t) \|_{\ell^1}
		&\le  \left(\lambda^{-2} \mathtt{L}^{-1/2}+{ N^7 2^{3 N-1} \vartheta \lambda\, \mathbb{K}^2 \gamma^2}+\mathtt{L}^{-1}\,{N^7 2^{5N} \lambda^{-3} \mathbb{K} \gamma}\,\right)\,\exp(\mathbb{K} \gamma N^4 4^N C_0).
	\end{align*}
	We impose the following conditions
	\begin{align}
	 \frac{3}{2}\,\exp(\mathbb{K} \gamma N^4 4^N C_0)< \,\lambda^{1-\epsilon} \mathtt{L}^{1/2}, \label{miss1}\\
	\vartheta\, <  \frac{2}{3} \lambda^{-2-\epsilon}\, N^{-7} 2^{-3N+1} \gamma^{-2} \mathbb{K}^{-2} \exp(-\mathbb{K} \gamma N^4 4^N C_0), \label{miss2} \\
	\,\mathbb{K} \gamma N^7 2^{5N}\,\left(\frac{3}{2} \exp(\mathbb{K} \gamma N^4 4^N C_0) \right)  <  \,\lambda^{2-\epsilon} \mathtt{L}. \label{miss3}
	\end{align}
	Thanks to the assumptions \eqref{cond:final} the above conditions are satisfied, provided that $N>0$ is taken large enough.
%	For $N$ large enough condition \eqref{miss1} implies \eqref{miss3} thanks to the first assumption in \eqref{cond:final}.
%	{We consider $N>0$ large enough such that
%	\[
%	 \exp(-\mathbb{K} \gamma N^4 4^N C_0)\geq  \lambda^{-\epsilon}.
%	\]
%	}
%	Then the second assumption in \eqref{cond:final} implies \eqref{miss2} for $N$ large enough.
	Hence we have that
	\[
	\| \xi( t) \|_{\ell^1}< 2  \lambda^{-(1+\epsilon)} \qquad \forall t\in [0, T].
	\]
	Therefore, $T_*\geq T$ and  we can drop the bootstrap assumption. This concludes the proof.
\end{proof}

\subsection{Growth of Sobolev norms for NLS solutions}\label{sec:conclusion}

The aim of this section is to prove the following result.

\begin{lemma}\label{lem:ratio}
	Fix $s>1$. Then, for $N>0$ large enough and assuming the assumptions of Proposition \ref{prop:approxarg} the following holds.
	There exists a solution $z(t)$ of \eqref{calH} such that
	\begin{equation}\label{ratio}
		\frac{\| z(T) \|_s^2}{\| z(0) \|_s^2}\gtrsim 2^{(s-1) (N-6)}\,\omega^{-2s},
	\end{equation}
	where $T$ is the time introduced in \eqref{def:timeT}.
\end{lemma}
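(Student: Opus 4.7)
The plan is to construct $z(t)$ by composing in reverse the coordinate changes of Section \ref{sec:resmodel}. Set $w(0) := r^\lambda(0)$, let $r(t)$ be the solution of the rotating system \eqref{vecfrot} with $r(0) = r^\lambda(0)$, define $w_n(t) := r_n(t)\,e^{\mathrm{i}(|n|_\omega^2 + V_n)t}$ by inverting \eqref{def:rotcoord}, and put $z(t) := \Gamma(w(t))$. Since $\Gamma$ is symplectic and $w$ evolves under $\mathcal{H}\circ\Gamma$, $z(t)$ is a genuine solution of the system \eqref{calH}. The trivial choice $r(0) = r^\lambda(0)$ satisfies the hypothesis \eqref{assump:initial} of Proposition \ref{prop:approxarg}, so $\|r(t) - r^\lambda(t)\|_{\ell^1} \leq \lambda^{-(1+\epsilon)}$ throughout $[0, T]$; moreover the rotation preserves moduli, giving $|w_n(t)| = |r_n(t)|$ for all $n, t$.

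For the lower bound I would restrict the Sobolev sum to the generation $\Lambda_{N-1}$, which by Theorem \ref{thm:orbit} and the scaling $r^\lambda_n(T) = \lambda^{-1} b_{N-1}(T_0)$ for $n\in\Lambda_{N-1}$ carries essentially all the mass at time $T$. For such $n$, the triangle inequality gives
\[
|z_n(T)| \geq |r_n^\lambda(T)| - \|r(T) - r^\lambda(T)\|_{\ell^1} - \|\Gamma(w(T)) - w(T)\|_{\ell^1} \geq \lambda^{-1}(1-\delta^\sigma) - \lambda^{-(1+\epsilon)} - C\mathtt{L}^{-1}\|w(T)\|_{\ell^1}^3.
\]
Using \eqref{bound:base}, \eqref{bound:GammaId} and the conditions \eqref{cond:final}, each correction term is $o(\lambda^{-1})$ for $N$ large, so $|z_n(T)| \geq \frac{1}{2}\lambda^{-1}$ on $\Lambda_{N-1}$ and $\|z(T)\|_s^2 \geq \frac{1}{4}\lambda^{-2}S_{N-1}$.

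For the upper bound I would split $\|z(0)\|_s \leq \|w(0)\|_s + \|\Gamma(w(0)) - w(0)\|_s$. Since $w(0) = r^\lambda(0)$ is mass-concentrated on $\Lambda_3$ by Theorem \ref{thm:orbit},
\[
\|w(0)\|_s^2 = \lambda^{-2}\sum_{j=1}^{N}|b_j(0)|^2\,S_j \leq \lambda^{-2}S_3 + \lambda^{-2}\delta^{2\sigma} N\,\sup_j S_j \leq 2\lambda^{-2} S_3
\]
by \eqref{bound:gen} and choosing $\gamma$ large in $\delta = e^{-\gamma N}$. The $\ell^1$ estimate \eqref{bound:GammaId} is not directly enough for the Sobolev term, and I would upgrade it to a tame bound $\|\Gamma(w)-w\|_s \lesssim \mathtt{L}^{-1}\|w\|_s\|w\|_{\ell^1}^2$ by observing that the Birkhoff generator has coefficients of size $\lesssim \mathcal{L}_1^{-1} \leq \mathtt{L}^{-1}$ (by \eqref{lower_bound}) and running the weighted Young convolution inequality of Lemma \ref{lem:young}. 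Applied at $w(0)$ this yields $\|\Gamma(w(0)) - w(0)\|_s \lesssim \mathtt{L}^{-1} 4^N\lambda^{-2}\|w(0)\|_s$, negligible under \eqref{cond:final}, so $\|z(0)\|_s^2 \lesssim \lambda^{-2} S_3$.

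Combining the two bounds, $\|z(T)\|_s^2/\|z(0)\|_s^2 \gtrsim S_{N-1}/S_3$. Theorem \ref{thm:gen} provides $S_{N-2}/S_3 \gtrsim 2^{(s-1)(N-4)}\omega^{-2s}$, and the geometric structure of the $\Lambda$-set (where mode magnitudes grow monotonically between consecutive generations) gives the mild comparison $S_{N-1} \gtrsim 2^{-2(s-1)}S_{N-2}$, producing $S_{N-1}/S_3 \gtrsim 2^{(s-1)(N-6)}\omega^{-2s}$ and proving \eqref{ratio}. The hard part of the plan will be the Sobolev tame estimate on $\Gamma - \mathrm{Id}$, which is not stated explicitly in Proposition \ref{prop:wbnf}: its justification relies on the fact that the huge lower bound $\mathcal{L}_1 \geq \mathtt{L}$ from Lemma \ref{lem:L1} together with \eqref{cond:final} forces $\mathtt{L}^{-1}$ to be so small that any polynomial Sobolev weight is absorbed, allowing the Birkhoff construction to be rerun in any Sobolev scale without loss.
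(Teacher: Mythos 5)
Your construction inverts the normalization the paper uses, and this is the source of the extra difficulty you flag. You set $r(0)=r^\lambda(0)$ in the Birkhoff--rotating coordinates and then define $z:=\Gamma(w)$, whereas the paper sets $z(0)=r^\lambda(0)$ directly in the original variables and takes $r(0)=\Gamma^{-1}(r^\lambda(0))$. Your choice makes the hypothesis \eqref{assump:initial} trivially zero, while the paper must verify it through the near--identity bound \eqref{bound:GammaId}; but you pay for it at the other end: since $z(0)=\Gamma(r^\lambda(0))$ is no longer compactly supported on $\Lambda$, controlling $\|z(0)\|_s$ forces you to prove a Sobolev bound on $\Gamma-\mathrm{Id}$ that Proposition \ref{prop:wbnf} does not supply. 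The paper sidesteps this entirely: with $z(0)=r^\lambda(0)$ the initial Sobolev norm is exactly $\|r^\lambda(0)\|_s^2=\lambda^{-2}\sum_j|b_j(0)|^2 S_j$, computable from the toy--model orbit and Theorem \ref{thm:gen} without ever touching $\Gamma$ in the $H^s$ topology, and the verification of \eqref{assump:initial} costs only one application of \eqref{bound:GammaId}. Your proposed tame estimate $\|\Gamma(w)-w\|_s\lesssim\mathtt{L}^{-1}\|w\|_s\|w\|_{\ell^1}^2$ is provable (the generator removes only $\mathcal{H}^{(4,1)}$, preserves momentum, has coefficients $\le\mathcal{L}_1^{-1}\le\mathtt{L}^{-1}$, and a Gronwall argument in $H^s$ closes), so your route does go through, but it effectively requires re--running part of the Birkhoff construction in the Sobolev scale -- exactly the work the paper's normalization is designed to avoid. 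One further gap: you track $\Lambda_{N-1}$ (following the index in Theorem \ref{thm:orbit} as printed) and then need $S_{N-1}\gtrsim 2^{-2(s-1)}S_{N-2}$, which is not among the stated properties of $\Lambda$ and would have to be extracted from the CKSTT construction; the paper's proof and the remark at the start of Section \ref{sec:resmodel} use $\Lambda_{N-2}$, for which Theorem \ref{thm:gen} gives $S_{N-2}/S_3\gtrsim 2^{(s-1)(N-4)}\omega^{-2s}$ directly and this extra comparison is unnecessary.
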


\begin{proof}

	We consider the solution $z(t)$ of the Hamiltonian system \eqref{calH} such that 
	\begin{equation}\label{agri}
	z(0)=r^{\lambda}(0),
	\end{equation} where $r^{\lambda}(t)$, defined in \eqref{def:rlambda}, is the solution of the truncated Hamiltonian $\mathcal{N}$ in \eqref{def:N} exhibiting the energy cascade behavior. 
	We note that, by \eqref{bound:base},
	\begin{equation}\label{giorgi}
	\| z(0)\|_{\ell^1}=\| r^{\lambda}(0) \|_{\ell^1}\lesssim  N 2^{N-1} \lambda^{-1}.
	\end{equation}
	By condition \eqref{cond:final} on $\lambda$ and provided that $N$ is large enough, $z(0)$ belongs to the ball of $\ell^1$ where the Birkhoff map (given in Proposition \ref{prop:wbnf}) is well defined.
	
	Using the approximation argument (Proposition \ref{prop:approxarg}) we want to show that $z(t)$ inherits from $r^{\lambda}(t)$ the property of having most of the energy concentrated initially on low modes and that, at later times, some of the energy has been passed to high modes. To compare $z(t)$ and $r^{\lambda}(t)$ we need to write $z(t)$ in Birkhoff and rotating coordinates, respectively given in Proposition \ref{prop:wbnf} and \eqref{def:rotcoord}.
	
	We write the solution $z(t)$ as
	\begin{equation}\label{giorgi2}
	z(t)=\Gamma\left(\{ r_n(t)\,e^{\mathrm{i} \lambda(n) t} \}_n \right),
	\end{equation}
	where $r(t)=\sum_{n\in \Z^2} r_n(t)\,e^{\mathrm{i} n x}$ is solution of the system \eqref{vecfrot} (the normalized equation in rotating coordinates). 
	\begin{remark}\label{otto}
	We shall verify that $r(t)$ stays in the $\ell^1$-domain of definition of the Birkhoff map $\Gamma$ over the range of time $[0, T]$.
	\end{remark}
	
	Since the Birkhoff map (and its inverse) are locally close to the identity (recall the bound on $\mathtt{L}$ in \eqref{cond:final}) and $z(0)$ satisfies \eqref{giorgi} we have that
		\begin{equation}\label{mildred}
		\| r(0) \|_{\ell^1}=\|  \Gamma^{-1} (z(0))\|_{\ell^1}\le \| z(0) \|_{\ell^1}+\| \Gamma^{-1} (z(0))-z(0)\|_{\ell^1}\lesssim \lambda^{-1} N\,2^N.
	\end{equation}
	Now we show that $r(t)$ fits into the assumption of Proposition \ref{prop:approxarg} and so it is well approximated by the trajectory $r^{\lambda}(t)$ given in \eqref{def:rlambda}.
	To this end, we need to check condition \eqref{assump:initial}.
	By the bound \eqref{mildred}, the closeness to the identity of the Birkhoff map, the conditions on the initial data \eqref{agri}, on $\lambda$ \eqref{cond:final} and on $\mathtt{L}$ in \eqref{cond:final} we have, provided that $N$ is large enough,
	\begin{equation}\label{bound:diff}
	\begin{aligned}
		\| r(0)-r^{\lambda}(0) \|_{\ell^1} &\le \| r(0)-z(0) \|_{\ell^1}+\| z(0)-r^{\lambda}(0) \|_{\ell^1}\le \| r(0)-\Gamma(r(0)) \|_{\ell^1}\\ 
		&\le C \mathtt{L}^{-1} N^3 2^{3(N-1)} \lambda^{-3}\le \lambda^{-2} \mathtt{L}^{-1/2},
		%\stackrel{\eqref{bound:GammaId2}}{\lesssim} {q^{-2}} \| r(0) \|_{\ell^1}^3.
		\end{aligned}
		\end{equation}
		where $C>0$ is a universal constant.

	Hence, the assumption \eqref{assump:initial} is satisfied and we can apply the approximation argument (Proposition \ref{prop:approxarg}) for a fixed $\epsilon>0$. Therefore
	\begin{equation}\label{mildred2}
	\sup_{t\in [0, T]}\| r(t)-r^{\lambda}(t) \|_{\ell^1}\le \lambda^{-(1+\epsilon)}.
	\end{equation}
	By \eqref{bound:base} and \eqref{mildred2}, we have, for all $t\in [0, T]$,
	\begin{equation}\label{agri2}
	 \| r(t) \|_{\ell^1}\le \| r^{\lambda}(t) \|_{\ell^1}+\| r(t)-r^{\lambda}(t) \|_{\ell^1}\le C  N 2^{N-1} \lambda^{-1}-\lambda^{-(1+\epsilon)}\lesssim N 2^{N-1} \lambda^{-1}.
	\end{equation}
	Hence, since $\lambda$ satisfies \eqref{cond:final} and $N$ is large enough, \eqref{giorgi2} is well defined (recall Remark \ref{otto}).
	Now we use Proposition \ref{prop:approxarg} to show that 
	\begin{equation}\label{imperiale}
	|z_n(T)|\geq \frac{\lambda^{-1}}{2} \qquad \forall n\in \Lambda_{N-2}.
	\end{equation} 
	We have that
	\begin{equation}
	\begin{aligned}
	|z_n(T)|&\geq |r_n(T)|-|\Gamma_n(\{ r_n(t)\,e^{\mathrm{i} \lambda(n) t} \}_n)- r_n(t)\,e^{\mathrm{i} \lambda(n) t} |\\
	&\geq |r_n^{\lambda}(T)|-|r_n(T)-r_n^{\lambda}(T)|-|\Gamma_n(\{ r_n(t)\,e^{\mathrm{i} \lambda(n) t} \}_n)- r_n(t)\,e^{\mathrm{i} \lambda(n) t} |.
	\end{aligned}
	\end{equation}
By the construction performed in Section \ref{sec:resmodel}, in particular by Theorem \ref{thm:orbit}, and the definition of $r^{\lambda}(t)$ in \eqref{def:rlambda} we have
\[
|r_n^{\lambda}(T)|=\lambda^{-1} |b_{N-2}(T_0)|\geq \frac{3}{4} \lambda^{-1} \qquad \forall n\in \Lambda_{N-2}.
\]
By \eqref{mildred2} 
\[
|r_n(T)-r_n^{\lambda}(T)|\le \lambda^{-(1+\epsilon)}.
\] 
By \eqref{agri2} and \eqref{bound:GammaId} we have
\[
|\Gamma_n(\{ r_n(T)\,e^{\mathrm{i} \lambda(n) T} \}_n)- r_n(T)\,e^{\mathrm{i} \lambda(n) T} |\lesssim \mathtt{L}^{-1} N^3 2^{3(N-1)} \lambda^{-3}.
\]
By collecting the previous estimates and taking $N$ large enough we obtain the bound \eqref{imperiale}. We deduce the following lower bound for the Sobolev norm of $z(T)$
\[
\| z(T)\|_s^2\geq \sum_{n\in \Lambda_{N-2}} |n|^{2s} |z_n(T)|^2\geq \frac{\lambda^{-2}}{4} S_{N-2}.
\]
Now we claim that
\begin{equation}\label{bound:initialnorm}
\| z(0)\|_s^2=\| r^{\lambda}(0) \|_s^2\sim \lambda^{-2} S_3.
\end{equation}

Recalling the definition of $r^{\lambda}$ in \eqref{def:rlambda} and using Theorem \ref{thm:orbit} we have
\[
\| r^{\lambda}(0) \|_s^2=\sum_{n\in\Lambda} |n|^{2s} |r^{\lambda}_n(0)|^2\le \lambda^{-2} S_3+\lambda^{-2} \delta^{2\sigma} \sum_{i\neq 3} S_i\le \lambda^{-2} S_3 \left( 1+\delta^{2\sigma} \sum_{i\neq 3} \frac{S_i}{S_3}\right).
\]
By \eqref{bound:gen}, we have that
\[
\delta^{2\sigma} \sum_{i\neq 3} \frac{S_i}{S_3}\le \delta^{2\sigma} C\,(N-1)\,e^{s N}
\]
for some universal constant $C>0$. Since $\delta=e^{-\gamma N}$ with a $\gamma$ large to be fixed (see Theorem \ref{thm:orbit}), we can take $\gamma$ such that $2\gamma\sigma>s$. Then
%We choose {$\gamma=s \tilde{\gamma}$(why you put $s-1?$)} with $\tilde{\gamma}$ large enough such that
\[
\lambda^{-2} S_3 \left( 1+\delta^{2\sigma} \sum_{i\neq 3} \frac{S_i}{S_3}\right)\sim \lambda^{-2} S_3.
\]

This proves the claim \eqref{bound:initialnorm}. To obtain \eqref{ratio} it is enough to use Theorem \ref{thm:gen}, indeed
\[
\frac{\| z(T) \|_s^2}{\| z(0) \|_s^2}\gtrsim\frac{S_{N-2}}{S_3}\gtrsim \,2^{(s-1)(N-4)}\,\omega^{-2s}.
\]
	%The proof of \eqref{ratio} follows by Lemma $7.1$ in \cite{GG21}.
\end{proof}

\noindent\textbf{Conclusion of the proof of Theorem \ref{thm:weak}}
Fix $s>1$, $\cK> 0$ large enough and
\[
\psi(q)=\frac{1}{q\,\log(q)}.
\]
With this choice of $\psi$, by Khinchin theorem \cite{Kinch}
the set of frequencies $\omega$ which are $\psi$-approximable has full Lebesgue measure. We consider $N>0$ such that
\begin{equation}\label{bobo}
 2^{(s-1) (N-6)}\,\omega^{-2s}= {\cK^2} \qquad \Rightarrow \qquad N= \frac{2}{(s-1) \log(2)} \log(\cK \omega^s)+6.
\end{equation}
Note that we can enlarge $N$ by taking $\cK$ larger.
Set {
\[
\lambda=2^{sN}
\]} 
{as in the assumptions of Proposition \ref{prop:approxarg}.}
We can apply Lemma \ref{lem:ratio} which ensures the existence of a solution $z(t)$ of the Hamiltonian system \eqref{calH} satisfying the norm explosion property \eqref{ratio}. By using the mass conservation, the function $u=e^{2 \mathrm{i} \mathcal{M} t} z$ is a solution of the NLS equation \eqref{NLS} and exhibits the norm explosion \eqref{weakI}. Now we show the bound on the $L^2$ norm. We have
\[
\| u(0) \|_{L^2}^2\le \mathtt{C} \lambda^{-2} 2^{N},
\]
for some universal constant $\mathtt{C}>0$. We have that $\mathtt{C} \lambda^{-2} 2^{N}\le \cK^{-1}$ if, for instance,
\[
\cK\geq 2^{6(1-s)} \omega^{-2s} \mathtt{C}.
\]
Then it is sufficient to take $\cK$ large enough to ensure the desired bound on the $L^2$  norm.

Now we prove the estimate on the diffusion time $T$ in  \eqref{weakI}. By the choice of $\lambda$ and \eqref{bobo} we have that, for $\cK>0$ large enough,
\[
\lambda^2=2^{2 s N}=2^{12 s} (\cK\,\omega^s)^{\frac{4s}{s-1}}.
\]
 By \eqref{bound:timeT0} and \eqref{bobo} $T_0\le 2^{4 sN}=\lambda^4$.
By the definition of $T$ in \eqref{def:timeT} and taking $\cK>0$ large enough we have
\[
T=\lambda^2 T_0\le (\cK \omega^{s})^c
\]
for some universal constant $c>0$.

%We first observe that \eqref{bobo} implies that
%\[
%\cK^{-2}\geq \omega^{2s} 2^{-(s-1) (N-6)}=2^N \lambda^{-2}.
%\]
%
%
% comes from the bound on the time $T_0$ \eqref{bound:timeT0}, the rescaling \eqref{def:timeT} with $\lambda\sim 2^{N(1+s)}$ and the fact that $N\sim {\log(K\,\omega^s)}/{(s-1)}$.
%
\subsection{Proof of Theorem \ref{thm:strong}}\label{sec:thm2}

Fix $s>1$, $\tau>2s$, $\mathcal{K}>0$ large enough and $\mu>0$ small enough.
The proof of Theorem \ref{thm:strong} follows the proof of Theorem \ref{thm:weak} until Section \ref{sec:approxarg}.

The approximation argument here is slightly different and it is given by the following.
\begin{proposition}\label{prop:approxarg2}
	%Recall $R$ in \eqref{def:R}, $\gamma$ in Theorem \ref{thm:orbit} and \eqref{set:qlambda}.\\ 
	Fix $\epsilon>0$ small enough. Then, for $\lambda$ and $q$ satisfying the assumption \eqref{assumption} of Lemma \ref{lem:L1},
\begin{equation}\label{cond:final2}
\lambda\geq \exp(5^N), \qquad \vartheta  \le \lambda^{-2(1+\epsilon)}, \qquad \mathtt{L}\geq 1
\end{equation}
	and for $N>0$ large enough
	the following holds. If $r(t)$ is a solution of \eqref{vecfrot} such that
	\begin{equation}\label{assump:initial2}
		\| r(0)-r^{\lambda}(0) \|_{\ell^1}\le \lambda^{-(1+2\epsilon)}
	\end{equation}
	then
	\begin{equation}\label{bound:close2}
		\sup_{t\in [0, T]}\| r(t)-r^{\lambda}(t) \|_{\ell^1}\le \lambda^{-(1+\epsilon)},
	\end{equation}
	where $T$ is the time in \eqref{def:timeT}.
\end{proposition}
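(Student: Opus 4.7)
The plan is to mimic the strategy of Proposition~\ref{prop:approxarg} almost verbatim, with the parameter trade-off shifted so that largeness of $\lambda$ (now $\lambda\geq \exp(5^N)$) absorbs the loss coming from the weakened lower bound on $\mathtt{L}$ (now only $\mathtt{L}\geq 1$). I set $\xi(t):=r(t)-r^{\lambda}(t)$ and split the equation for $\xi$ as $\dot{\xi}=Z_0+Z_1+Z_2+Z_3+Z_4$, with the $Z_i$ defined exactly as in the proof of Proposition~\ref{prop:approxarg}. I then open a standard bootstrap: let $T^{*}$ be the supremum of times $T'\in[0,T]$ such that $\|\xi(t)\|_{\ell^1}\leq 2\lambda^{-(1+\epsilon)}$ for all $t\in[0,T']$. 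The initial assumption \eqref{assump:initial2} guarantees $T^{*}>0$, and the goal becomes showing $T^{*}\geq T$ to drop the bootstrap.

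Within the bootstrap window, Lemma~\ref{lem:young} together with \eqref{bound:base} and \eqref{bound:remainder} yields
\[
\|Z_0\|_{\ell^1}\lesssim \mathtt{L}^{-1}N^5 2^{5N}\lambda^{-5}\lesssim N^5 2^{5N}\lambda^{-5},
\qquad
\|Z_1\|_{\ell^1}+\|Z_4\|_{\ell^1}\lesssim N^2 4^{N}\lambda^{-2}\,\|\xi\|_{\ell^1},
\]
\[
\|Z_2\|_{\ell^1}\lesssim N 2^{N}\lambda^{-2-\epsilon}\,\|\xi\|_{\ell^1},
\qquad
\|Z_3(t)\|_{\ell^1}\lesssim \mathcal{U}_0\,N^3 2^{3N}\lambda^{-3}\,t\lesssim \vartheta\,N^3 2^{3N}\lambda^{-3}\,t,
\]
where in the last bound I use Lemma~\ref{lem:U0} and the elementary inequality $|e^{\mathrm{i} a t}-1|\leq |a t|$. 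These are the same estimates as before, except that the prefactor on $Z_0$ is now $O(1)$ in $\mathtt{L}$ (since $\mathtt{L}\geq 1$ is all we assume) instead of exponentially small.

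Inserting these bounds into the differential inequality for $\|\xi\|_{\ell^1}$ and applying Gronwall on $[0,T]$ with $T=\lambda^2\gamma\mathbb{K}N^2$ gives
\[
\|\xi(t)\|_{\ell^1}\leq \Bigl(\lambda^{-(1+2\epsilon)}+N^7 2^{3N-1}\vartheta\,\lambda\,\gamma^2\mathbb{K}^2+ N^7 2^{5N}\lambda^{-3}\gamma\mathbb{K}\Bigr)\exp\bigl(C_0\,\gamma\,\mathbb{K}\,N^4 4^N\bigr).
\]
Now I match the second hypothesis in \eqref{cond:final2}: substituting $\vartheta\leq \lambda^{-2(1+\epsilon)}$ converts the second summand into $N^7 2^{3N-1}\gamma^2\mathbb{K}^2\,\lambda^{-(1+2\epsilon)}$, so all three summands are bounded by $\lambda^{-(1+2\epsilon)}$ times polynomial-in-$N$ and $2^{O(N)}$ factors. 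The exponential prefactor is $\exp(O(4^N N^4))$, which is independent of $\lambda$. Since by \eqref{cond:final2} we have $\lambda\geq \exp(5^N)$ and $5^N$ dominates $4^N N^4$ for large $N$, the factor $\lambda^{\epsilon}$ beats the exponential, so the whole right-hand side is $\leq 2\lambda^{-(1+\epsilon)}$ for $N$ large enough. This closes the bootstrap and yields \eqref{bound:close2}.

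The only delicate point is the parameter book-keeping: unlike Proposition~\ref{prop:approxarg}, where the exponential smallness of $\mathtt{L}^{-1}$ and $\vartheta$ directly beat $\exp(C_0\gamma\mathbb{K}N^4 4^N)$, here it is the largeness of $\lambda$ that has to do all the work. This is precisely the reason why, when we translate \eqref{cond:final2} into a constraint on the approximation parameters in the proof of Theorem~\ref{thm:strong}, $\lambda$ must be taken doubly exponential in $N$, giving the super-exponential time bound \eqref{time2} rather than the polynomial bound \eqref{weakI}. No genuinely new analytic obstacle appears; the work is entirely in matching exponents.
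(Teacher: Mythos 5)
Your reconstruction is correct and is essentially the argument the paper has in mind: the paper outsources this proof to Proposition~2 of \cite{GG21} with the remark that the constant $\mathtt{L}$ plays no role and can be replaced by $1$, and your argument does precisely that, re-running the Gronwall bootstrap of Proposition~\ref{prop:approxarg} with $\mathtt{L}^{-1}\leq 1$ and with the loss absorbed by the stronger hypotheses $\lambda\geq\exp(5^N)$, $\vartheta\leq\lambda^{-2(1+\epsilon)}$ and the initial condition $\lambda^{-(1+2\epsilon)}$. The parameter bookkeeping checks out: after substituting $t\leq T=\lambda^2\gamma\mathbb{K}N^2$ and $\vartheta\leq\lambda^{-2(1+\epsilon)}$, all three Gronwall summands are $O\bigl(\mathrm{poly}(N)\,2^{O(N)}\bigr)\lambda^{-(1+2\epsilon)}$, the exponential prefactor $\exp(C_0\gamma\mathbb{K}N^4 4^N)$ is dominated by $\lambda^{\epsilon}\geq\exp(\epsilon 5^N)$ for $N$ large, and the bootstrap closes. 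Your closing remark about why this forces $\lambda$ (and hence $T$) to be doubly exponential in $N$, in contrast to the polynomial time of Theorem~\ref{thm:weak}, is also an accurate reading of the trade-off.
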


The proof is the same of Proposition $2$ in \cite{GG21} and the presence of the constant $\mathtt{L}$ does not affect it. Actually one could consider $\mathtt{L}=1$, since we shall not exploit the size of $\mathtt{L}$ to prove Theorem \ref{thm:strong}.

%\red{\begin{remark}{($\omega=1$)}
%In the square case one has just to ignore the second condition in \eqref{cond:final2}.
%\end{remark}}

 We consider frequencies $\omega$ which are $\psi$--approximable with 
\begin{equation}\label{psitau}
\psi(q)=\frac{\tc}{q^{1+\tau}}, \qquad \tc\geq 1
\end{equation}
and satisfy
\[
 \left|\omega-\frac{p}{q}\right|\geq \frac{1}{q^{1+ \log q}}
\]
for $q$ large enough and any $p\in\mathbb{N}$.
Following \cite{GG21} one can show that given a sequence of convergents $(p_n, q_n)$ of $\omega$
we have
\begin{equation}\label{bound:qn+1}
q_n^{1+\tau}-q_n\le q_{n+1}\le q_n^{\log q_n}.
\end{equation}
Since $\lim_{n\to +\infty} q_n= +\infty$ then, for some fixed $n>0$, there exists $\mathtt{a}, \tilde{\kappa}, \kappa\gg 1$ such that
\[
q_n\in \left[\exp(\tilde{\kappa}\mathtt{a}^N), \exp(\kappa \mathtt{a}^N)\right].
\]
Then, by \eqref{bound:qn+1}, we have that for all $m>0$
\begin{equation}\label{bound:allq2}
q_{n+m}\in \left[\exp(\tilde{\kappa}\mathtt{a}^N), \exp( \kappa^m \mathtt{a}^{mN})\right].
\end{equation}
We shall use this fact to control the upper bound of the instability time.
 We consider $N$ such that
\[
2^{(s-1) (N-6)} \omega^{-2s}\sim \frac{\mathcal{K}^2}{\mu^2} \qquad \Rightarrow \qquad N\sim \frac{\log(\mathcal{K}\omega^s \mu^{-1})}{s-1}.
\]
 We want to impose that the Sobolev norm of the initial datum is  of order $\mu$. 
{ By \eqref{bound:initialnorm} we need to ask that
\begin{equation}\label{cond:roma}
\| z(0)\|_s^2\sim \lambda^{-2} S_3\sim \mu^{2}.
\end{equation}
%We observe that we could consider instead of the set $\Lambda$
%\[
%\Lambda':=\{ \beta n : n\in \Lambda \}
%\]
%for any $\beta\in \mathbb{N}$. Since this rescaling is isotropic it does not affect the bounds in Lemma \ref{lem:U0}. Let us call $\rho$ and $\rho'$ the solutions of the NLS equation that approximate respectively the orbits of the quasi-resonant model on $\Lambda$ and $\Lambda'$. Then $\| \rho(0) \|_s\sim \beta^s \| \rho'(0) \|_s$.
}
By \eqref{def:R} we deduce that
\[
C^{-2s}  2^{N-1}\,\,q^{2 s}\,R^{2s}\le S_3\le C^{2s}  2^{N-1}\,3^{2Ns}\,\omega^{2s}\,q^{2 s}\,R^{2s}.
\]
Thus we can take $\lambda^2\sim \mu^{-2} S_3$, which satisfies
\begin{equation}\label{lowuplambda}
C^{-s} 2^{N/2} q^s  R^s {\mu^{-1}}\lesssim \lambda\lesssim C^s 2^{N/2} 3^{Ns} \omega^s q^s R^s \,\mu^{-1}.
\end{equation}
We note that we can enlarge the above interval by choosing $N$ much larger (hence $\cK$ (or $\mu$) much larger (smaller) ). 

We need to prove that there exist $\lambda$ and $q$ such that: (i) the bounds \eqref{lowuplambda} hold, (ii) the assumptions of Proposition \ref{prop:approxarg2} are satisfied.

Recalling that $R\geq \exp(\alpha^N)$ with $\alpha>0$ large (see Theorem \ref{thm:gen}) it is easy to see that $\lambda$ in \eqref{lowuplambda} satisfies the first inequality in \eqref{cond:final2}, provided $N$ is large enough. By the choice of $\psi$ in \eqref{psitau}, the second inequality in \eqref{cond:final2} is equivalent to 
\[
\begin{cases}
q^{\tau}\gtrsim \lambda^{2(1+\epsilon)} \omega^3 R^2 3^{2N},\\
q^{s_0}\gtrsim \lambda^{2(1+\epsilon)} 4\,R^{-s_0}. 
\end{cases}
\]
We remark that $\epsilon$ given in Proposition \ref{prop:approxarg2} can be considered arbitrarily small.
The above inequalities are compatible with \eqref{lowuplambda} if
\begin{equation}\label{cond:compat}
\begin{cases}
q^{\tau-2s(1+\epsilon)}\gtrsim \omega^3  3^{2N} 2^{N(1+\epsilon)}  R^{2+2s(1+\epsilon)} \mu^{-2(1+\epsilon)},\\
(q R)^{s_0-2s (1+\epsilon)} \gtrsim 2^{N(1+\epsilon)} \mu^{-2(1+\epsilon)}.
\end{cases}
\end{equation}
Let us call 
\[
\nu:=\frac{\tau}{2(1+\epsilon)}-s, \qquad \nu_{*}:=\frac{s_0}{2(1+\epsilon)}-s.
\]
Since $\tau, s_0>2s$ and $\epsilon>0$ is arbitrarily small we have that $\nu, \nu_*>0$. {Therefore if
\begin{equation}\label{explosion}
 \tilde{\kappa}\geq \tilde{C} \frac{s(1+\tilde{\eta})}{\tau-2 s} \qquad \mathrm{for\,\,some\,\,universal\,\,constant}\,\,\tilde{C}>0,
\end{equation}
where $\tilde{\eta}$ is the constant introduced in Theorem \ref{thm:gen},
we have that, for all $q\geq \exp(\tilde{\kappa} \alpha^N)$ and $N$ large enough,
%ho messo il 2 anche se bastava > perché cosi sul tempo T mi viene la costante delta^-1, che è quasi come dire (tau-2s)^-1, come è nel teorema
\begin{equation}\label{lowbound:q}
\begin{cases}
q^{\nu}\gtrsim \omega^{\frac{3}{2(1+\epsilon)}} 3^{\frac{N}{1+\epsilon}} 2^{N/2}  R^{s+\tfrac{1}{1+\epsilon}}\,{\mu^{-1}},\\
(q R)^{\nu_*}\gtrsim 2^{N/2}\,\mu^{-1},
\end{cases}
\end{equation}
which implies the compatibility conditions \eqref{cond:compat}. By the discussion above (see \eqref{bound:allq2}) there exist $\kappa> \tilde{\kappa}$ and $\sigma> 1$ large enough such that the interval $[\exp(\tilde{\kappa} \alpha^N), \exp({\kappa} \alpha^{\sigma N})]$ contains a convergent $q\in\mathbb{N}$ that satisfies \eqref{lowbound:q}.}
Then, one can choose   $\lambda$ in the interval \eqref{lowuplambda} so that \eqref{cond:roma} holds true. That is, 
\begin{equation}\label{caldo}
{C}_*^{-1}\mu \le \| z(0) \|_s\le {C}_* \mu
\end{equation}
for some ${C}_*>0$ independent of $\mu$. Moreover, we have proved that
the chosen $\lambda$ and $q$ satisfy also the assumptions of Proposition \ref{prop:approxarg2}, which we can apply.
% The \eqref{lowuplambda} implies that , which is \eqref{cond:roma}. 
Lemma \ref{lem:ratio} (replacing in the statement Proposition \ref{prop:approxarg} with Proposition \ref{prop:approxarg2}) ensures the growth of Sobolev norms\footnote{The proof follows by Lemma $7.1$ in \cite{GG21}}. Together with \eqref{caldo}, this implies \eqref{bound:uppermu}.

%We are left to prove that such choice of $\lambda$ allows to apply Proposition \ref{prop:approxarg}.
%It is easy to see that by taking $\beta$ large enough $\lambda$ satisfies the first inequality in \eqref{cond:final} of Proposition \ref{prop:approxarg}. 
%Now we see that, thanks to the assumption $\tau>2s$, the second inequality in \eqref{cond:final} is satisfied and then we can apply Proposition \ref{prop:approxarg}. By replacing $\psi$ as in \eqref{psitau} this condition becomes
%\[
%\mathtt{c}^{-1} q^{\tau}\geq  N^{7} 72^{N} \lambda^{2(1+\epsilon)} R^{2}.
%\]
%The above condition is satisfied for ... if
%\[
%\mathtt{a}^N \tau-2(1+\epsilon) s \beta^N-4(1+\eta) \alpha^N>0.
%\]
%This is possible if
%\[
%\mathtt{a}^N (\tau-2(1+\epsilon) s \kappa)>4 (1+\eta) (1+s(1+\epsilon)) \alpha^N
%\]
%and then
%\[
%\tau>2(1+\epsilon) s \kappa.
%\]
%Then \eqref{cond:roma} is equivalent to
%\begin{equation}\label{cond:roma2}
%\lambda\gtrsim {\sqrt{2}^N 3^{Ns}\,\,R^s \mu^{-1} q^s}.
%\end{equation}
%Let us set
%\[
%\lambda=C\,q^s\,(\sqrt{2}^N 3^{Ns}\,\,R^s \mu^{-1})
%\]
%for some $C\gg 1$. The condition \eqref{cond:final} becomes
%\[
%\tc \,q^{s-\tau}=q^{1+s}\psi(q)\le N^{-7} 72^{-N}\,2^{-N}\,3^{-2Ns}\,R^{-2-2s}\, \mu^2.
%\]

%\medskip
%
%Since we consider $\tau>2s$, there exists $q$ satisfying
%\[
% \exp\left({\frac{A_1^N}{\tau-s}}\right)\leq q\leq \exp\left({\frac{A_2^N}{\tau-s}}\right),
%\]
%for some $1\leq A_1<A_2$, which depend on $s$, for which the condition \eqref{cond:final} holds. Clearly $\lim_{\tau\to s^+} q=\infty$. Hence the time $T$ in \eqref{def:timeT} explodes as $\tau\to s^+$.

The upper bound \eqref{time2} on the time $T$ in \eqref{def:timeT} is obtained by using the bounds \eqref{lowuplambda}, the fact that $q\le \exp(\kappa\alpha^{\sigma N})$ and the upper bound of $R$ given in Theorem \ref{thm:gen}. 
%We remark that by \eqref{explosion} if $\tau\to 2s^+$ the constant $\tilde{\kappa}$, and so $\kappa$, tends to $+\infty$. 
%This means that we lose completely the control on the time $T$ when $\tau$ approaches the lower bound $2s$.

\bibliographystyle{plain}
\bibliography{biblio}

\def\cprime{$'$} \def\cprime{$'$}
\begin{thebibliography}{10}

\bibitem{Besi}
A.~S. Besicovitch.
\newblock Sets of fractional dimension ($iv$): On rational approximation to
  real numbers.
\newblock {\em J. Math. London Math. Soc.}, 9:126--131, 1934.

\bibitem{Bou95}
J.~Bourgain.
\newblock Aspects of long time behaviour of solutions of nonlinear
  {H}amiltonian evolution equations.
\newblock {\em Geom. Funct. Anal.}, 5(2):105--140, 1995.

\bibitem{Bou96}
J.~Bourgain.
\newblock On the growth in time of higher {S}obolev norms of smooth solutions
  of {H}amiltonian {PDE}.
\newblock {\em Internat. Math. Res. Notices}, (6):277--304, 1996.

\bibitem{Bourgain00b}
J.~Bourgain.
\newblock Problems in {H}amiltonian {PDE}'s.
\newblock {\em Geom. Funct. Anal.}, Special Volume, Part I:32--56, 2000.
\newblock GAFA 2000 (Tel Aviv, 1999).

\bibitem{CKSTT}
J.~Colliander, M.~Keel, G.~Staffilani, H.~Takaoka, and T.~Tao.
\newblock Transfer of energy to high frequencies in the cubic defocusing
  nonlinear {S}chr{\"o}dinger equation.
\newblock {\em Invent. Math.}, 181(1):39--113, 2010.

\bibitem{Deng}
Y.~Deng.
\newblock On growth of sobolev norms for energy critical nls on irrational
  tori: Small energy case.
\newblock {\em Communications on Pure and Applied Mathematics}, 72:801--834,
  2019.

\bibitem{DengGerm}
Y.~Deng and P.~Germain.
\newblock Growth of solutions to {NLS} on irrational tori.
\newblock {\em International Mathematics Research Notices}, 2019(9):2919--2950,
  09 2017.

\bibitem{GerardG10}
P.~G{\'e}rard and S.~Grellier.
\newblock The cubic {S}zeg{\"o} equation.
\newblock {\em Ann. Sci. \'Ec. Norm. Sup\'er. (4)}, 43(5):761--810, 2010.

\bibitem{GGlattice}
F.~Giuliani and M.~Guardia.
\newblock Arnold diffusion in hamiltonian systems on infinite lattices.
\newblock {\em Preprint, arXiv:2204.10602}, 2022.

\bibitem{GG21}
F.~Giuliani and M.~Guardia.
\newblock Sobolev norms explosion for the cubic nls on irrational tori.
\newblock {\em Nonlinear Analysis}, 220:112865, 2022.

\bibitem{Guardia14}
M.~Guardia.
\newblock Growth of {S}obolev norms in the cubic nonlinear {S}chr\"odinger
  equation with a convolution potential.
\newblock {\em Comm. Math. Phys.}, 329(1):405--434, 2014.

\bibitem{GuardiaHHMP19}
M.~Guardia, E.~Haus, Z.~Hani, A~Maspero, and M.~Procesi.
\newblock Strong nonlinear instability and growth of {S}obolev norms near
  quasiperiodic finite-gap tori for the 2{D} cubic {N}{L}{S} equation.
\newblock To appear on J. Eur. Math. Soc. (JEMS), 2020.

\bibitem{GuardiaHP16}
M.~Guardia, E.~Haus, and M.~Procesi.
\newblock Growth of {S}obolev norms for the analytic {NLS} on {$\Bbb{T}^2$}.
\newblock {\em Adv. Math.}, 301:615--692, 2016.

\bibitem{GuardiaK12}
M.~Guardia and V.~Kaloshin.
\newblock Growth of {S}obolev norms in the cubic defocusing nonlinear
  {S}chr\"{o}dinger equation.
\newblock {\em J. Eur. Math. Soc. (JEMS)}, 17(1):71--149, 2015.

\bibitem{GuardiaK12Err}
M.~Guardia and V.~Kaloshin.
\newblock Erratum to ``{G}rowth of {S}obolev norms in the cubic defocusing
  nonlinear {S}chr\"odinger equation'' [{MR}3312404].
\newblock {\em J. Eur. Math. Soc. (JEMS)}, 19(2):601--602, 2017.

\bibitem{Hani12}
Z.~Hani.
\newblock Long-time instability and unbounded {S}obolev orbits for some
  periodic nonlinear {S}chr\"odinger equations.
\newblock {\em Arch. Ration. Mech. Anal.}, 211(3):929--964, 2014.

\bibitem{HaniPTV15}
Z.~Hani, B.~Pausader, N.~Tzvetkov, and N.~Visciglia.
\newblock Modified scattering for the cubic {S}chr\"odinger equation on product
  spaces and applications.
\newblock {\em Forum Math. Pi}, 3:e4, 63, 2015.

\bibitem{HPquintic}
E.~Haus and M.~Procesi.
\newblock Growth of {S}obolev norms for the quintic {NLS} on {$T^2$}.
\newblock {\em Anal. PDE}, 8(4):883--922, 2015.

\bibitem{StafWil2}
A.~Hrabsky, Y.~Pan, G.~Staffilani, and B.~Wilson.
\newblock Energy transfer for solutions to the nonlinear {S}chr\"odinger
  equation on irrational tori.
\newblock {\em Preprint available at https://arxiv.org/abs/2107.01459}, 2021.

\bibitem{Jarnik}
V.~Jarn\'ik.
\newblock Diophantische approximationen und hausdorffsches mass.
\newblock {\em Mat. Sb.}, 36:371--382, 1929.

\bibitem{Kinch}
A.~Khintchine.
\newblock Einige s\"atze \"uber kettenbr\"uche, mit anwendungen auf die theorie
  der diophantischen approximationen.
\newblock {\em Math. Ann.}, 92:115--125, 1924.

\bibitem{Kuksin97b}
S.B. Kuksin.
\newblock Oscillations in space-periodic nonlinear {S}chr{\"o}dinger equations.
\newblock {\em Geom. Funct. Anal.}, 7(2):338--363, 1997.

\bibitem{StafWil}
G.~Staffilani and B.~Wilson.
\newblock Stability of the {C}ubic nonlinear {S}chrodinger {E}quation on an
  irrational torus.
\newblock {\em SIAM J. Math. Anal.}, 52:1318--1342, 2020.

\end{thebibliography}

\end{document}